\newtheorem{question}{Question}
\newtheorem{question'}[varquest]{Question}
\newtheorem{thm}{Theorem}
\newtheorem{lemma}{Lemma}[section]
\newtheorem{prop}[lemma]{Proposition}
\newtheorem{rem}[lemma]{Remark}
\newtheorem{theorem}[lemma]{Theorem}
\newtheorem*{defn*}{Definition}
\newtheorem*{remark*}{Remark}
\newtheorem*{thm*}{Theorem}
\newtheorem*{exs*}{Examples}
\newtheorem*{quesA'}{Question A'}
\newtheorem*{quesD'}{Question D'}
\newcommand{\R}{\mathbb{R}}
\newcommand{\C}{\mathbb{C}}
\newcommand{\Z}{\mathbb{Z}}
\newcommand{\cc}{\mathbb{\mathcal C}}
\newcommand{\ca}{\mathcal A}
\newcommand{\cv}{\mathcal V}
\newcommand{\ci}{\mathcal{I}}
\newcommand{\id}{\textnormal{Id}\,}
\newcommand{\nbd}{neighbourhood }
\newcommand{\nbds}{neighbourhoods }
\newcommand{\fonction}[5]
{$$ 
\begin{array}{rcccl}
 #1 & : & #2 & \longrightarrow &#3 \\
    &   & #4 & \longmapsto &#5 
\end{array}
$$}
\newcommand{\rond}[1]{\overset{\vspace*{-1pt}\circ}{#1}}
\newcommand{\priv}{\backslash}
\newcommand{\hra}{\hookrightarrow}
\newcommand{\om}{\omega}
\newcommand{\eps}{\varepsilon}
\renewcommand{\phi}{\varphi}
\newcommand{\st}{\textnormal{st}}
\newcommand{\diam}{\text{Diam}\,}
\newcommand{\cqfd}{\hfill $\square$ \vspace{0.1cm}\\ }
\newcommand{\sbull}{{\tiny $\bullet$ }}
\newcommand{\ds}{\displaystyle}
\newcommand{\im}{\textnormal{Im}\,}
\newcommand{\nf}[2]{{\nicefrac{#1}{#2}}}
\newcommand{\und}{d_{\cc^0}}
\newcommand{\iso}{{\textnormal{iso}}}
\newcommand{\op}{\textnormal{Op}\,}
\newcommand{\size}{\textnormal{Size}\,}
\newcommand{\corank}{\textnormal{Corank}\,}
\renewcommand{\eqref}[1]{(\ref{#1}\hspace{-,15cm})}
\def\eps{\varepsilon}
\makeatletter \@addtoreset {equation}{section}
\renewcommand\theequation
\z@ \arabic{section}.\arabic{subsection}.\arabic{equation}
  \else \arabic{section}.\arabic{equation} \fi}
\begin{document}

%
%

\title{Quantitative $h$-principle in symplectic geometry}

\author{Lev Buhovsky and Emmanuel Opshtein}


\date{\today}
\maketitle

\begin{center}
{\it Dedicated to Claude Viterbo, on the occasion of his 60th birthday}
\end{center}

\begin{abstract}
We prove a quantitative $h$-principle statement for subcritical isotropic embeddings. As an application, we construct a symplectic homeomorphism that takes a symplectic disc into an isotropic one in dimension at least $6$. 
\end{abstract}

\section{Introduction}
Gromov's $h$-principle lies at the core of symplectic topology, by reducing many questions on the existence of embeddings or immersions to verifying their compatibility with algebraic topology. Symplectic topology focuses mainly on the other problems, that do not abide by an $h$-principle : Lagrangian embeddings, existence of symplectic hypersurfaces in specific homology classes  etc. In \cite{buop}, we have proved a refined version of $h$-principle, which in turn yielded applications to $\cc^0$-symplectic geometry. For instance, we proved in \cite{buop} that in dimension at least $6$, $\cc^0$-close symplectic $2$-discs of the same area are isotopic by  a small symplectic isotopy, while in dimension $4$, this does no longer hold. A similar quantitative $h$-principle was also used in \cite{buhuse} in order to show that the symplectic rigidity manifested in the Arnold conjecture for the number of fixed points of a Hamiltonian diffeomorphism completely disappears for Hamiltonian homeomorphisms in dimension at least $4$. 

The goal of this note is to prove a quantitative $h$-principle for  {\it isotropic} embeddings  and to derive some flexibility statements on symplectic homeomorphisms. 
\begin{thm}[Quantitative $h$-principle for subcritical isotropic embeddings]\label{thm:qhiso1}
Let $V$ be an open subset of $\C^n$, $k<n$, $u_0,u_1:D^k\hra V$ be  isotropic embeddings of closed discs. We assume that there exists a homotopy $F:D^k\times [0,1]\to V$ between $u_0$ and $u_1$ (so $F(\cdot,0)=u_0$, $F(\cdot,1)=u_1$) of size less than $\eps$ (i.e. $\diam F(\{z\}\times [0,1])<\eps$ for all $z\in D^k$).

 Then there exists a compactly supported in V Hamiltonian isotopy $(\Psi^t)_{t\in [0,1]}$ of size $2\eps$ (i.e. $ \diam \{ \Psi^t(z) \, | \, t \in [0,1] \} < 2 \eps $ for every $ z \in V $), such that $\Psi^1\circ u_0=u_1$.
\end{thm}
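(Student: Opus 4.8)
The plan is to combine a \emph{flexibility} step, which replaces the homotopy $F$ by a genuine isotopy of isotropic embeddings of nearly the same size, with a \emph{localization} step, which realizes such an isotopy by a compactly supported Hamiltonian isotopy at the cost of a bounded factor in the size. For the first step I would argue that, since $D^k\times[0,1]$ is contractible, the bundle $F^*T\C^n$ is trivial, and the space of $\om$-isotropic $\R$-linear injections $\R^k\to\C^n$ is connected precisely because $k<n$; hence the $1$-jets of $u_0$ and $u_1$ can be joined by a formal isotropic homotopy covering $F$. Feeding this into the $C^0$-dense parametric $h$-principle for subcritical isotropic embeddings (Gromov; in the quantitative form needed here, the isotropic analogue of the refined $h$-principle of \cite{buop}), relative to $t\in\{0,1\}$, one obtains a family $(u_t)_{t\in[0,1]}$ of isotropic embeddings $D^k\hra V$ joining $u_0$ to $u_1$ and $C^0$-close to $F$, so that $\diam\{u_t(z)\mid t\in[0,1]\}<\eps+\delta$ for every $z\in D^k$, with $\delta>0$ as small as we like.

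Next I would build a thin, moving Weinstein model along this isotopy. By Weinstein's isotropic neighbourhood theorem, and because over the disc $D^k$ the bundles $T^*D^k$ and the conformal symplectic normal bundles of the $u_t(D^k)$ are all trivial, one can pick, smoothly in $t$, symplectic embeddings $\theta_t\colon(W,\om_{\mathrm{st}})\to V$ of the fixed model $W=D^k\times B^k_\eta\times B^{2(n-k)}_\eta$ onto \nbds of $u_t(D^k)$, with $\theta_t(z,0,0)=u_t(z)$. Defining the $\theta_t$ by parallel transport of the normal data along the paths $t\mapsto u_t(z)$ — using a fixed connection and a fixed framing, available since the base is a disc — one arranges that $\Theta_t:=\theta_t\circ\theta_0^{-1}$ displaces every point of $U:=\theta_0(W)$ by less than $\eps+\delta+C\eta$. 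Thus $\Theta_t$ is a symplectic isotopy of $U$ with $\Theta_1\circ u_0=u_1$ and all trajectories of diameter $<\eps+\delta+C\eta$.

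It then remains to turn $\Theta_t$ into a compactly supported Hamiltonian isotopy of $V$ while keeping the size below $2\eps$. The point is that $\Theta_t$ itself need be neither Hamiltonian, nor globally defined, nor slow; but it can be \emph{fragmented}. Covering a \nbd of the trace $\bigcup_t u_t(D^k)$ by balls $B_j$ of small radius $r$ with $\bar B_j\subset V$ and bounded multiplicity (so that they split into $O(1)$ families of pairwise disjoint balls), I would write the motion as a finite composition of symplectic isotopies, each supported in one $B_j$ and still displacing points by $<\eps+\delta+C\eta+\delta'$. Over a ball of radius $r$, a motion of size $\rho$ is generated by a Hamiltonian of $C^0$-norm $O(\rho r)$; cutting this Hamiltonian off inside $B_j$ therefore costs only $O(\rho)$, coming from the bracket term $K_tX_\chi$ with $\|X_\chi\|=O(1/r)$. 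Reassembling the cut-off pieces — organised so that the cut-offs equal $1$ along the relevant arcs of the trace, so that $u_0$ is still carried onto $u_1$ — and letting $r,\eta,\delta,\delta'\downarrow 0$, produces the desired compactly supported Hamiltonian isotopy $\Psi^t$ with $\Psi^1\circ u_0=u_1$; the factor $2$ in the bound is exactly the original displacement $\eps$ plus the extra $\eps$ produced by the cut-off.

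The main obstacle is Step~1: what is needed is not the classical $h$-principle for isotropic embeddings but a \emph{quantitative} version, in which the produced isotopy stays $C^0$-close to the prescribed homotopy and still consists of embeddings — the first feature is what turns ``size $<\eps$'' into ``size $<\eps+\delta$'', while the second is where subcriticality $k<n$ is genuinely used. Existence of the formal isotropic homotopy is free over a disc, but its holonomic approximation with simultaneous $C^0$-control is the technical heart, to be imported from, or redone along the lines of, \cite{buop}. Steps~2--3 are softer, but also require care: the moving Weinstein identifications must be chosen to vary slowly enough off the disc, and the fragmentation must be organised so that the unavoidable cut-off losses add up to no more than a single extra~$\eps$.
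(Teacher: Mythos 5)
Your Step~1 is fine, but it is not the technical heart: it is exactly Theorem~\ref{thm:hpiso}(b) (the $\cc^0$-dense parametric $h$-principle of Eliashberg--Mishachev), which hands you an isotropic isotopy $u_t$ uniformly $\delta$-close to $F$ with no extra work. The genuine gap is Step~3, the fragmentation, which is where all the content of the theorem lives and which you assert rather than prove. Two concrete problems. First, a fragment supported in a small ball $B_j$ must carry its portion of the disc \emph{exactly} onto the target while fixing the portion lying near $\partial B_j$; to cut off the locally defined generating Hamiltonian without destroying the flow on the part that must stay fixed, the Hamiltonian has to be \emph{constant} (not just locally constant) there. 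When that fixed set is disconnected --- already for a $1$-cell of a grid, fixed at its two endpoints --- the two constants can differ, and their difference is an action obstruction $\int_\gamma u_1^*\lambda-u_0^*\lambda$ that must vanish before any localization is possible. This is precisely the content of Proposition~\ref{prop:hpiso-proper} (the case $l=1$) and of the ``adjust the actions of the edges'' step in the paper's proof; your argument never meets it, and the $\|K\|_{C^0}=O(\rho r)$ estimate is beside the point (what fails under cut-off is not smallness but the identity $\Psi^1\circ u_0=u_1$). Second, the order of composition matters: once the first fragments have acted, the intermediate image of the disc has moved, and later fragments supported in other balls must be guaranteed not to disturb the pieces already put in place. ``Bounded multiplicity of the cover'' does not give this; one needs general-position statements such as Lemma~\ref{le:hamgic} and the disjointness condition ($\ci 4$), together with bookkeeping of the formal data (Lemma~\ref{le:goodchoice}) so that the relative $h$-principle can be invoked in the next cell.

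The paper resolves exactly these difficulties by replacing your ``cover by balls and reassemble'' with an induction over the skeleta of a fine grid (Proposition~\ref{prop:induction}): at each stage the map is corrected on the $l$-cells by Hamiltonians supported in pairwise disjoint regular neighbourhoods $\cv_{x_l}$, the action obstruction is killed once and for all on the edges, and the size control comes from the combinatorial bound $\Psi_l^t(W_x)\subset W_x^{2\cdot 3^{l-1}}$ rather than from a norm estimate on the Hamiltonians. Your Step~2 (moving Weinstein models) is also not needed in this scheme and does not help with the localization: cutting off along the whole trace $\bigcup_t\theta_t(W)$ gives a Hamiltonian isotopy whose size is comparable to $\diam u_0(D^k)$, not to $\eps$. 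As written, the sentence ``reassembling the cut-off pieces \dots\ so that $u_0$ is still carried onto $u_1$'' is a restatement of the theorem, not a proof of it.
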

The proof shows that the theorem holds in the relative case, provided $u_0,u_1$ are symplectically isotopic, relative to the boundary. 
The method of the proof of theorem \ref{thm:qhiso1} follows a very similar track as the quantitative $h$-principle for symplectic discs that we established in \cite{buop}. Paralleling the construction of a symplectic homeomorphism whose restriction to a symplectic $2$-disc is a contraction in dimension $6$, we can deduce from theorem \ref{thm:qhiso1} the following statement:  
\begin{thm}\label{thm:symptoiso} There exists a symplectic homeomorphism with compact support in $\C^3$ which 
takes a symplectic $2$-disc to an isotropic one. 
\end{thm}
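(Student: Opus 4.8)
The plan is to mimic the construction from \cite{buop} of a compactly supported symplectic homeomorphism of $\C^3$ whose restriction to a symplectic $2$-disc is a contraction, but now steering the disc towards an isotropic target rather than towards a point. Fix a small symplectic $2$-disc $D_0 \subset \C^3$ and a small isotropic $2$-disc $D_\infty \subset \C^3$ (say a piece of a Lagrangian plane, or just a flat isotropic disc), chosen so that there is a smooth path of embedded $2$-discs $D_t$, $t\in[0,1)$, from $D_0$ that converges in the $\cc^0$ sense to $D_\infty$ as $t\to 1$, with the $D_t$ for $t<1$ all symplectic. The homeomorphism will be produced as a $\cc^0$-limit $\Psi = \lim_{N\to\infty} \Psi_N$ of symplectic diffeomorphisms, where $\Psi_N$ sends $D_0$ onto $D_{t_N}$ for a sequence $t_N \nearrow 1$. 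The point of Theorem \ref{thm:qhiso1} is that, once a given disc $D_{t_N}$ has been pushed $\cc^0$-close enough to being isotropic, the remaining correction — isotoping it onto the genuinely isotropic model on the relevant scale — can be realized by a Hamiltonian isotopy whose size is controlled by the $\cc^0$-distance, hence summably small along a suitable subsequence.

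The key steps, in order, are as follows. First, choose the intermediate discs $D_{t_N}$ so that $D_{t_N}$ is $\eps_N$-close to an isotropic disc with $\sum_N \eps_N < \infty$; this is a soft interpolation step, exactly as in the symplectic-disc case of \cite{buop}, and uses only that symplectic discs can be $\cc^0$-approximated by (pieces of) isotropic discs after suitable rescaling in one of the symplectic directions. Second, at stage $N$ apply the relative/quantitative $h$-principle: since consecutive discs $D_{t_N}$ and $D_{t_{N+1}}$ are joined by a homotopy of size $O(\eps_N)$, Theorem \ref{thm:qhiso1} (in its isotropic, and its symplectic analogue from \cite{buop} for the symplectic stages) furnishes a compactly supported Hamiltonian isotopy $\Phi_N$ of size $O(\eps_N)$ with $\Phi_N^1(D_{t_N}) = D_{t_{N+1}}$. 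Third, set $\Psi_N = \Phi_N^1\circ \cdots \circ \Phi_1^1$; the size bounds give that $(\Psi_N)$ is uniformly Cauchy in $\cc^0$ on $\C^3$ (the tails $\sum_{M\ge N}\eps_M$ control $d_{\cc^0}(\Psi_M,\Psi_N)$), so $\Psi := \lim \Psi_N$ exists, is a homeomorphism with compact support, and is symplectic in the sense of being a $\cc^0$-limit of symplectomorphisms. Fourth, check that $\Psi(D_0) = \overline{\bigcup} D_{t_N} = D_\infty$ is isotropic: the images converge in $\cc^0$ to $D_\infty$, and $\Psi$ being the $\cc^0$-limit of the $\Psi_N$ forces $\Psi(D_0)=D_\infty$ as sets.

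The main obstacle is the same one that makes the symplectic-disc statement of \cite{buop} delicate: one must arrange the intermediate discs $D_{t_N}$ and the accompanying homotopies so that the $\cc^0$-sizes are \emph{summable} while each $D_{t_N}$ with $t_N<1$ remains symplectic and each individual Hamiltonian correction stays supported in a fixed compact subset of $\C^3$ — the size of the Hamiltonian isotopy produced by Theorem \ref{thm:qhiso1} is controlled, but its support is only "compact in $V$", so one needs to keep all the relevant discs and homotopies inside one fixed open $V\Subset\C^3$ throughout the induction. Managing this requires a careful choice of the geometric model for the degeneration $D_0\rightsquigarrow D_\infty$: one shrinks the disc in a direction transverse to the eventual isotropic plane at a geometric rate, so that the homotopy from stage $N$ to stage $N+1$ has size comparable to the $N$-th term of a convergent geometric series, and all stays within a bounded region. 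Once this bookkeeping is set up, the passage to the limit and the verification that $\Psi$ is a genuine symplectic homeomorphism carrying a symplectic disc to an isotropic one is routine, following \cite{buop} verbatim; dimension $3$ (i.e. $\C^3$) is exactly the minimal case where a $2$-disc is subcritical isotropic ($k=2<n=3$), which is why Theorem \ref{thm:qhiso1} applies.
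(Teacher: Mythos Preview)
Your overall architecture is right and mirrors the paper's, but there is a real gap at the point where you assert that the $\cc^0$-limit $\Psi = \lim \Psi_N$ is a homeomorphism. Summability of the sizes $\eps_N$ makes $(\Psi_N)$ uniformly Cauchy and hence gives a continuous compactly supported limit, but it does \emph{not} give injectivity: nothing prevents distinct points from being pushed together in the limit, and you have no control on $(\Psi_N^{-1})$. Saying the endgame follows \cite{buop} ``verbatim'' does not help, because the injectivity argument there (and in the present paper) rests on a \emph{nesting of supports in the source}: one fixes in advance a decreasing sequence of open sets $U_i \searrow D_0$ and arranges that $\supp \Phi_i \subset \Psi_{i-1}(U_{i-1})$, so that for every $x \notin D_0$ the sequence $\Psi_i(x)$ eventually stabilises. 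Your only support condition---keeping everything inside one fixed $V \Subset \C^3$---is much weaker and cannot yield this.

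This is precisely where Theorem~\ref{thm:qhiso1} enters, and its role is not the one you describe. In your scheme all intermediate discs $D_{t_N}$ are symplectic and you pass between them with the \emph{symplectic} $h$-principle of \cite{buop}; the isotropic Theorem~\ref{thm:qhiso1} is never substantively invoked (``the remaining correction'' to an isotropic model never actually occurs at any finite stage). In the paper, by contrast, each inductive step is a two-move composite $\phi_i = \phi_i'' \circ \phi_i'$. The current neighbourhood $\Psi_{i-1}(U_i)$ may be an arbitrarily thin and uncontrollably distorted tube around the current symplectic disc $u_{k_i}$, since there is no $\cc^1$ bound on $\Psi_{i-1}$. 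So one first places an auxiliary \emph{isotropic} disc $i_{k_i}^{l_i}$ inside this thin tube and applies Theorem~\ref{thm:qhiso1} to obtain a small $\phi_i'$ sending $i_{k_i}^{l_i}$ onto the fixed target $i_0$; this forces the image $\phi_i'(\Psi_{i-1}(U_i))$ to contain a \emph{standard} neighbourhood $W^0(\eps_{i+1})$ of $i_0(D)$. Only then does the symplectic $h$-principle furnish $\phi_i''$, supported in that image, carrying $\phi_i' \circ u_{k_i}$ to the next symplectic disc $u_{k_{i+1}} \subset W^0(\eps_{i+1})$. The net effect is the inductive inclusion $\Psi_i(U_i) \supset W^0(\eps_{i+1})$, which is exactly what drives the injectivity proof. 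Without the isotropic step there is no reason the thin tube $\Psi_{i-1}(U_i)$ should contain the next target disc or any homotopy to it, and the induction cannot continue.
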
 
Of course, by considering products, we infer that there exists symplectic homeomorphisms that take some codimension $4$ symplectic submanifolds to submanifolds which are nowhere symplectic. 

The note is organized as follows. We prove theorem \ref{thm:qhiso1} in the next section. The construction of a symplectic homeomorphism that takes a symplectic disc to an isotropic one is explained in section  \ref{sec:eligro}, where we also explain a relation to relative Eliashberg-Gromov type questions, as posed in \cite{buop}. 
\paragraph{Acknowledgments:} We are very much indebted to Claude Viterbo for his support and interest in our research. Claude's fundamental contributions to symplectic geometry and topology, and in particular to the field $\cc^0 $ symplectic geometry, are widely recognized. We wish Claude all the best, and to continue enjoying math and delighting us with his creative and inspiring mathematical works.

This paper is a result of the work done during visits of the first author at Strasbourg University, and a visit of the second author at Tel Aviv University. We thank both universities and their symplectic teams for a warm hospitality. We thank Maksim Stokic for pointing our attention to a gap in our proofs in a previous version of the paper. We thank the referee for careful reading and useful comments. The first author was partially supported by ERC Starting Grant 757585 and ISF Grant 2026/17.

\paragraph{Conventions and Notations}
We convene the following in the course of this paper:
\begin{itemize}
\item All our homotopies and isotopies have parameter space $[0,1]$. For instance $(g_t)$ denotes an isotopy $(g_t)_{t\in [0,1]}$. 
\item Similarly, by concatenation of homotopies we always mean {\it reparametrized} concatenation. 
\item If $F:[0,1]\times X\to Y$ is a homotopy with value in a metric space, $\size (F):=\max\{\diam\big(F([0,1]\times \{x\})\big), \; x\in X\}$.
\item For $A\subset B$, $\op(A,B)$ stands for an arbitrarily small \nbd of $A$ in $B$. To keep light notation, we omit $B$ whenever there is no possible ambiguity. 
\item A homotopy $F:[0,1]\times N\to M$ is said relative to $A\subset N$ if for every $ z \in A $, $ F(t,z) $ is independent on $ t \in [0,1] $. 
\item A homotopy $G:[0,1]^2\times N\to M$ between $F_0,F_1:[0,1]\times N\to M$ (that is a continuous map such that $G(i,t,z)=F_i(t,z)$ for $i=0,1$) is said relative to $A$ and $\{0,1\}$ if $G(s,t,z)=F_0(t,z)=F_1(t,z)$ for all $z\in A$ and if $G(s,i,z)=F_0(i,z)$ for all $s\in [0,1]$.
\end{itemize}

\section{Quantitative $h$-principle for isotropic discs}
The aim of this section is to prove theorem \ref{thm:qhiso1}.

\subsection{Standard $h$-principle for subcritical isotropic embeddings}\label{sec:hppe}
We recall in this section the main properties of the action of the Hamiltonian group on isotropic embeddings, as described in \cite{gromov2,elmi}. To this purpose, we first fix some notations. In the current note, a disk $D^k$ is always assumed to be closed, unless explicitly stated (hence an embedding of $D$ inside an open set is always compactly embedded). Since we only deal with isotropic embeddings, it is enough to prove theorem \ref{thm:qhiso1} for subcritical isotropic embeddings of $ [-1,1]^k $ rather than of a closed disc. By abuse of notation, in this section we denote $D^k = [-1,1]^k $. The set of isotropic framings $G^{\iso}(k,n)$ is the space of $(k,2n)$-matrices of rank $k$ whose columns span an isotropic vector space in $(\R^{2n},\om_\st)$. 

Recall that the $h$-principle for subcritical isotropic embeddings provides existence of isotropic embeddings or homotopies whose derivatives realize homotopy classes of maps to $G^\iso(k,n)$. 
We will need a specialization of the $h$-principle for subcritical isotropic embeddings to $\C^n$, which in particular addresses a relative setting. In order to present its formulation, we will use the following terminology: if $A\subset D^k$, a homotopy of $f:D^k\to G^\iso(k,n)$  rel $\op(A)$ is a continuous map $F:[0,1]\times D^k\to G^\iso(k,n)$ such that $F(t,z)=f(z)$ for all $z\in \op(A)$. A homotopy $G:[0,1]^2\times D^k\to G^\iso(k,n)$ between $F_0,F_1:[0,1]\times D^k\to G^\iso(k,n)$ (that is a continuous map such that $G(i,t,z)=F_i(t,z)$ for $i=0,1$) is said relative to $\op(A)$ and $\{0,1\}$ if $G(s,t,z)=F_0(t,z)=F_1(t,z)$ for all $z\in \op(A)$ and if $G(s,i,z)=F_0(i,z)$ for all $s\in [0,1]$ and $ i \in \{ 0,1 \} $. 

\begin{theorem}[Parametric $\cc^0$-dense relative $h$-principle for isotropic embeddings {\cite{elmi}}] \label{thm:hpiso} Let $k<n$:
\begin{itemize}
\item[a)] Let $\rho:D^k\to \C^n$ be a continuous map whose restriction to a \nbd of a closed subset $A\subset D^k$ is an isotropic embedding. 
Assume that $d\rho$ is homotopic to a map $G:D^k\to G^\iso(k,n)$ relative to $\op(A)$. Then,
for any  $\eps>0$, there exists an isotropic embedding $u:D^k\hra \C^n$ which coincides with $\rho$ on $\op(A)$, $\und(\rho,u)<\eps$ and such that $du:D^k\to G^\iso(k,n)$ is homotopic to $G$ rel \op(A). 
\item[b)] Let $u_0,u_1:D^k\hra \C^n$ be isotropic embeddings, which coincide on a \nbd of a closed subset $A\subset D^k$. Let $G:[0,1]\times D^k\to G^\iso(k,n)$ be a homotopy between $du_0,\; du_1$ rel $\op(A)$ and $\rho_t:D^k\to \C^n$ a homotopy between $u_0,u_1$ rel $\op(A)$. For any $\eps>0$, there exists an isotropic isotopy $u_t:D^k\hra \C^n$ ($ t \in [0,1] $) relative to $\op(A)$ such that $\und(\rho_t,u_t)<\eps$ and $\{du_t\}$ is homotopic to $G$ rel $\op(A)$ and $\{0,1\}$.  
\end{itemize}
\end{theorem}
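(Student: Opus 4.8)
My plan is to deduce Theorem~\ref{thm:hpiso} from the general form of Gromov's $h$-principle for subcritical isotropic embeddings, rather than to reprove it from scratch. Recall that this $h$-principle comes in two layers. First, isotropic \emph{immersions} satisfy the complete ($\cc^0$-dense, parametric, relative) $h$-principle: this is classical, going back to Gromov's theory of directed immersions and convex integration. Second, when $k<n$ one has $2k<2n$, so two $k$-dimensional sheets are in general position exactly when they are disjoint; exploiting the extra room available in $G^\iso(k,n)$ in the subcritical range, a parametric and relative general-position move upgrades a family of isotropic immersions of a $k$-dimensional object into $\C^n$ to a family of isotropic \emph{embeddings}, staying within isotropic maps throughout. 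Combining the two layers, the map from the space of isotropic embeddings $D^k\hra\C^n$ to the space of \emph{formal} isotropic embeddings -- a smooth embedding $f$ together with an isotropic monomorphism $TD^k\to f^*T\C^n$ -- is a weak homotopy equivalence which is $\cc^0$-dense onto formal data with embedded underlying map and which has relative versions over closed subsets where the formal solution is already holonomic. This is what I would cite from \cite{gromov2,elmi}; the rest is bookkeeping.

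The first simplification is that over the contractible disc $D^k$ a formal isotropic embedding carries no more data than a pair: a smooth embedding $f:D^k\hra\C^n$ and a map $D^k\to G^\iso(k,n)$. Indeed, $TD^k$ and $f^*T\C^n$ are trivial, so fixing a trivialization of $TD^k$ identifies the isotropic monomorphism with a map into the isotropic frame space; and the homotopy from $df$ to this monomorphism through arbitrary monomorphisms, which is part of the formal datum, exists and is unique up to homotopy because the space of monomorphisms $\R^k\to\R^{2n}$ is connected as soon as $2n\ge 2k+2>k+1$. Thus there is no hidden compatibility class to keep track of.

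Part (a) is then immediate. We may take $\rho$ smooth, and then, using $2k<2n$, replace it by a $\cc^0$-small perturbation supported away from $\op(A)$ -- where $\rho$ is already an isotropic embedding -- to assume $\rho$ itself is a smooth embedding $\cc^0$-close to the original. On $\op(A)$ the homotopy $d\rho\simeq G$ rel $\op(A)$ forces $G=d\rho$, so $(\rho,G)$ is a formal isotropic embedding, holonomic near $A$; the $\cc^0$-dense relative $h$-principle applied to it produces an isotropic embedding $u:D^k\hra\C^n$ with $u=\rho$ on $\op(A)$, $\und(\rho,u)<\eps$, and $du$ homotopic to $G$ rel $\op(A)$. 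For part (b), the path $t\mapsto(\rho_t,G(t,\cdot))$ lies in the space of formal isotropic embeddings, its endpoints are the holonomic formal solutions associated with $u_0$ and $u_1$ (no smoothing is needed there, since $u_0,u_1$ are already embeddings), and it is constant on $\op(A)$, where $u_0=u_1$, $\rho_t\equiv u_0$ and $G(t,\cdot)\equiv du_0$. The parametric relative $\cc^0$-dense $h$-principle for one-parameter families with fixed endpoints then homotopes this path, rel its endpoints and rel $\op(A)$, to a path $t\mapsto u_t$ of genuine isotropic embeddings with $\und(\rho_t,u_t)<\eps$ and $\{du_t\}$ homotopic to $G$ rel $\op(A)$ and $\{0,1\}$; constancy on $\op(A)$ makes $(u_t)$ an isotopy relative to $\op(A)$, which is the assertion.

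The one genuinely delicate point -- and the one I would import essentially verbatim from \cite{elmi} rather than redo -- is the family/relative passage from isotropic immersions to isotropic embeddings. If a self-contained proof were wanted, one would run the convex-integration scheme in a thin neighbourhood of the finitely many, transverse, codimension $2n-2k>0$ double-point sheets, using that the fibre $G^\iso(k,n)$ remains highly connected under this localization; this is precisely where subcriticality $k<n$ is used, and it is also what makes the $\cc^0$-density come for free. Everything else amounts to matching the relative-parametric formulation ``rel $\op(A)$ and $\{0,1\}$'' with the standard statement of the $h$-principle.
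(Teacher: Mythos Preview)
Your proposal is correct and matches the paper's treatment: the paper does not prove Theorem~\ref{thm:hpiso} at all but simply quotes it from \cite{elmi} as a known instance of Gromov's $h$-principle for subcritical isotropic embeddings. Your sketch of how the statement follows from the general parametric, relative, $\cc^0$-dense $h$-principle (immersion layer plus the subcritical embedding upgrade) is exactly the intended derivation, and the bookkeeping reduction of formal data over $D^k$ to maps into $G^\iso(k,n)$ is the right observation.
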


The next lemma will be used in the proof of theorem \ref{thm:qhiso1}. 

\begin{lemma}\label{le:goodchoice}
Let $A,B$ be two closed subsets of $ D^k $. Let $u_0,u_1:D^k\hra \C^n$ be subcritical isotropic embeddings that coincide on $\op(A)$. Assume that we are given a  homotopy $G_t:D^k\to G^\iso(k,n)$ between $du_0$ and $du_1$ rel $\op(A)$. Let $v_t:D^k\hra \C^n$ be an isotropic isotopy between $u_0$ and $v_1$ rel $\op(A)$, such that $v_{1|\op(B)}=u_1$, and such that $\{dv_{t|\op(B)}\}$ is homotopic to $\{G_{t|\op(B)}\}$ relative to  $\op(A)$ and $\{0,1\}$\footnote{Recall that this means there exists a continuous map $G:[0,1]^2\times \op(B)\to G^\iso(k,n)$ such that $G(0,t,z)=G_t(z)$ and $G(1,t,z)=dv_t(z)$ $\forall (t,z)\in[0,1]\times \op(B)$, $G(s,t,z)=du_0(z)$ $\forall (s,t,z)\in [0,1]^2\times \op(A\cap B)$, $G(s,0,z)=G_0(z)=du_0(z)$  and $G(s,1,z)=G_1(z)=dv_1(z)$ $\forall (s,z)\in [0,1]\times \op(B)$.}. 
Then $ dv_1 $ and $ du_1 $ are homotopic rel $ \op(A \cup B) $ among maps $D^k\to G^\iso(k,n)$.
\end{lemma}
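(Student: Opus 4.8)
The statement is purely homotopy-theoretic: it asserts that two maps lying in one path-component of the space of maps $D^k\to G^\iso(k,n)$ that are fixed on $\op(A)$ in fact lie in one path-component of the smaller space of such maps that are \emph{moreover} fixed on $\op(B)$. I would prove it formally, using nothing about $G^\iso(k,n)$ beyond its being a topological space, via the homotopy lifting/extension property. First replace $\op(A)$ and $\op(B)$ by compact codimension-$0$ submanifolds with corners $A'\supseteq A$, $B'\supseteq B$ of $D^k$, chosen small enough that $u_0=u_1$ near $A'$, that the isotopy $v_t$ and the homotopies $G_t$ and $G$ are defined and relative near $A'$, and that $B'\hra D^k$, $A'\hra D^k$ and $A'\cap B'\hra B'$ are cofibrations. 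Since $v_t$ is relative to $\op(A)$ and $u_0=u_1$ there, $v_1=u_1$ on $A'$; also $v_1=u_1$ on $B'$ by hypothesis; hence $dv_1=du_1$ on $A'\cup B'$, so the assertion makes sense, and it suffices to produce a homotopy relative to $A'\cup B'$.

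\emph{The obstructing loop.} Concatenate the reversed isotopy $\tau\mapsto dv_{1-\tau}$ (running from $dv_1$ to $dv_0=du_0$) with the homotopy $\tau\mapsto G_\tau$ (from $du_0$ to $du_1$): this is a path $\Gamma$ from $dv_1$ to $du_1$, and it takes values in $\mathcal E:=\{f\colon D^k\to G^\iso(k,n)\ :\ f|_{A'}=du_1|_{A'}\}$ because $v_t$ and $G_t$ are relative to $\op(A)\supseteq A'$. Restricting each $\Gamma(\tau)$ to $B'$ produces a \emph{loop} based at $du_1|_{B'}$ in $\mathcal B:=\{g\colon B'\to G^\iso(k,n)\ :\ g|_{A'\cap B'}=du_1|_{A'\cap B'}\}$ (note $dv_1|_{B'}=du_1|_{B'}$ since $v_1=u_1$ near $B'$). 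This loop is nullhomotopic rel basepoint, and this is exactly where the hypothesis enters: restricting the given homotopy of homotopies $G\colon[0,1]^2\times\op(B)\to G^\iso(k,n)$ to $B'$ gives a homotopy, rel $\{0,1\}$ and rel $A'\cap B'$, between the paths $\tau\mapsto G_\tau|_{B'}$ and $\tau\mapsto dv_\tau|_{B'}$; hence the loop above (reverse of the latter path, followed by the former) is homotopic rel basepoint to (reverse of a path)$\ast$(that same path), which is nullhomotopic by the standard argument. On $A'\cap B'$ every map occurring equals the constant $du_0=du_1$, so all of these homotopies take place inside $\mathcal B$.

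\emph{Lifting.} The restriction map $\mathcal E\to\mathcal B$, $f\mapsto f|_{B'}$, is a Hurewicz fibration (restriction along the cofibration $B'\hra D^k$, the constraints on $A'$ and $A'\cap B'$ causing no trouble), with fibre over $du_1|_{B'}$ equal to $\mathcal F:=\{f\ :\ f|_{A'\cup B'}=du_1|_{A'\cup B'}\}$, which contains both $dv_1$ and $du_1$. Now $\Gamma$ is a path in $\mathcal E$ joining two points of $\mathcal F$ and projecting to a nullhomotopic loop in $\mathcal B$. Lift a nullhomotopy of that loop — a map $[0,1]_\tau\times[0,1]_\sigma\to\mathcal B$ — to a map $[0,1]_\tau\times[0,1]_\sigma\to\mathcal E$ whose restriction to the three sides $[0,1]_\tau\times\{0\}\cup\{0,1\}_\tau\times[0,1]_\sigma$ is prescribed to be $\Gamma$, the constant $dv_1$ and the constant $du_1$, respectively; the prescribed data is compatible and projects correctly, and the lift exists because the union of three of the four sides of the square is a deformation retract of it while $\mathcal E\to\mathcal B$ is a fibration. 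The $\sigma=1$ edge of the lift is a path from $dv_1$ to $du_1$ projecting to the constant loop, hence a path inside $\mathcal F$ — that is, precisely a homotopy among maps $D^k\to G^\iso(k,n)$ from $dv_1$ to $du_1$ that is stationary on $A'\cup B'\supseteq\op(A\cup B)$, which is the lemma. (Equivalently, avoiding fibration language: extend the nullhomotopy, given only over $B'$, over all of $D^k$ by the homotopy extension property for $(D^k,B')$, keeping it stationary over $A'$ and at $\tau=0,1$; the resulting $\sigma=1$ edge is the same path.)

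\emph{Main difficulty.} There is no genuinely hard point; the work lies in the relative bookkeeping — checking that the nullhomotopy of the loop in $\mathcal B$, the requirement of being relative to $A'$, and the requirement of fixing the endpoints $dv_1$ and $du_1$ all assemble into a single legitimate instance of the (relative) homotopy lifting/extension property — and in choosing $A',B'$ small enough that every hypothesis of the lemma is available on them. The conceptual thing to keep in mind is that the statement is, and must be, formal: the hypothesis that $\{dv_{t|\op(B)}\}$ and $\{G_{t|\op(B)}\}$ are homotopic rel endpoints and rel $\op(A)$ is exactly the datum that trivializes the one obstruction, and no property of the complicated space $G^\iso(k,n)$ is needed.
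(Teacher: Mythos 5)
Your proof is correct and follows essentially the same route as the paper's: form the concatenated path $\overline{dv_\bullet}\star G_\bullet$ from $dv_1$ to $du_1$, observe that its restriction to (a neighbourhood of) $B$ is nullhomotopic rel endpoints and rel $A$ by the hypothesis, and then extend that nullhomotopy to $D^k$ keeping it stationary on $A$ and at the endpoints. The only cosmetic difference is that you invoke the homotopy lifting/extension property abstractly (and note the equivalent HEP formulation in parentheses), whereas the paper implements the extension concretely via a cutoff function $\chi$ that interpolates the given two-parameter homotopy across an annulus around $B$.
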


\begin{rem}
In the setting of lemma \ref{le:goodchoice}, since $ v_1 $ and $ u_1 $ are homotopic rel $ \op(A \cup B) $ (just consider the linear homotopy between them), the lemma and theorem \ref{thm:hpiso} immediately imply that $ v_1 $ is in fact isotropic isotopic to $ u_1 $ rel $ \op(A \cup B) $.
\end{rem}

\noindent{\it Proof of lemma \ref{le:goodchoice}:} 
Consider the homotopy $K_t:=dv_t:D^k\to G^\iso(k,n)$  between $du_0$ and $dv_1$ relative to $\op(A)$, and the homotopy $G_t:D^k\to G^\iso(k,n)$  between $du_0$ and $du_1$ rel $\op(A)$, provided by the assumption. Letting $\overline K_t:=K_{1-t}$, we now consider the concatenation $H_t:=\overline K_t\star G_t $. Since $\{dv_{t|\op(B)}\}$ is homotopic to $\{G_{t|\op(B)}\}$ relative to  $\op(A)$ and $\{0,1\}$ (as assumed by the lemma), there exists a homotopy $ H_{s,t} $ ($ s \in [0,1] $) between $ H_{t|\op(B)} $ and $ I_{t} $ relative to $\op(A)$ and $ \{ 0,1 \} $, where $ I_t \equiv d u_{1|\op(B)} = d v_{1|\op(B)} $ is a constant homotopy. Let $ \chi : D^k \rightarrow [0,1] $ be a continuous function such that $ \chi(x) = 0 $ on a complement of a sufficiently small neighborhood of $ B $ in $ D^k $, and $ \chi(x) = 1 $ on a (smaller) neighborhood of $ B $. Now define a homotopy $ \tilde G_t : D^k \rightarrow G^\iso(k,n) $ ($ t \in [0,1] $) by 

$$
\tilde G_t(z):=\left\{
\begin{array}{l}
H_{\chi(z),t}(z) \,\, \text{ when } z \in \op(B),\\
G_t(z) \,\,\,\,\,\,\,\,\,\,\,\,\, \text{ otherwise.}
\end{array}
\right.
$$
Then $ \tilde G_t $ is a desired homotopy between $du_1 $ and $ dv_1$ rel $\op(A\cup B)$.
\cqfd

We will also need the following lemma, which allows to achieve general positions by Hamiltonian perturbations.

\begin{lemma}\label{le:hamgic}
Let $V\subset \C^n$ be an open set. We consider the following two possible scenarios:
\begin{enumerate}
 \item Let $\Sigma_1,\Sigma_2$ be two smooth proper submanifolds of $V$, which are transverse in a \nbd of $\partial V$. Then there exists an arbitrarily $ \cc^1 $-small Hamiltonian flow $ (\phi^t)_{t \in [0,1]} $ whose generating Hamiltonian is compactly supported in $V$, such that $\phi^1(\Sigma_1)\pitchfork \Sigma_2$.
 \item Let $\Sigma_1 $ be a smooth proper submanifold of $V$, and let $ \Sigma_2 $ be a smooth manifold such that $ \dim \Sigma_1 + \dim \Sigma_2 \leqslant 2n-2 $. Furthermore, let $ \iota_t : \Sigma_2 \rightarrow V $ be a smooth proper family of embeddings for $ t \in [0,1] $, such that $ \Sigma_1 $ and $ \iota_t(\Sigma_2) $ do not intersect near the boundary of $ V $ (uniformly in $ t $). Then there exists an arbitrarily $ \cc^1 $-small Hamiltonian flow $ (\phi^t)_{t \in [0,1]} $ whose generating Hamiltonian is compactly supported in $V$, such that $\phi^1(\Sigma_1) \cap \iota_t(\Sigma_2) = \emptyset $ for any $ t \in [0,1] $.
\end{enumerate}
\end{lemma}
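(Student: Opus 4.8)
\emph{Strategy.} The plan is to realise the required diffeomorphism as the time-$1$ map $\phi^1$ of a Hamiltonian flow generated by a small \emph{localised translation}, and then to reach general position by the parametric transversality (Thom--Sard) theorem. Both scenarios follow one scheme. In the first we want $\phi^1(\Sigma_1)\pitchfork\Sigma_2$; in the second we want $\phi^1(\Sigma_1)$ disjoint from every $\iota_t(\Sigma_2)$, and since $\dim\Sigma_1+\dim\Sigma_2+1\le 2n-1<2n$ this is the same as asking the map $(x,w,t)\mapsto(\phi^1(x),\iota_t(w))$ to be transverse to the diagonal $\Delta_V\subset V\times V$ (a transverse preimage would then be a manifold of negative dimension, hence empty).

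\emph{The perturbation family.} For $v\in\C^n=\R^{2n}$, the linear Hamiltonian $\ell_v(z):=\om_\st(v,z)$ generates the translation flow $z\mapsto z+tv$. Fix a cut-off $\chi:\C^n\to[0,1]$ with $\supp\chi\Subset V$ and set $H_v:=\chi\,\ell_v$, a smooth family of Hamiltonians whose support is a compact subset of $V$. On the open set $U:=\{\chi\equiv1\}$ one has $d\chi\equiv0$, so there the Hamiltonian vector field of $H_v$ is the constant field $v$; hence for any compact $K\Subset U$ and $|v|<\mathrm{dist}(K,\C^n\setminus U)$ the time-$1$ map $\phi^1_{H_v}$ restricts on $K$ to the genuine translation $z\mapsto z+v$. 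Since $\|H_v\|_{\cc^2(V)}=O(|v|)$, the flow $\phi^1_{H_v}$ is $\cc^1$-small for $|v|$ small, equals the identity near $\partial V$, and carries the proper submanifold $\Sigma_1$ to a proper submanifold.

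\emph{Carrying it out, and the main obstacle.} In scenario 1 transversality of $\Sigma_1$ and $\Sigma_2$ can fail only on a compact set $K_0\subset V$, namely the complement of the neighbourhood of $\partial V$ on which they are transverse; in scenario 2 the assumption that $\Sigma_1$ and $\iota_t(\Sigma_2)$ stay uniformly away from $\partial V$ confines all the intersections $\Sigma_1\cap\iota_t(\Sigma_2)$ to a compact set $K_0\subset V$. Choose $\chi$ with $\chi\equiv1$ on a fixed neighbourhood $U$ of $K_0$, $\bar U\Subset V$, and take $P=\{v\in\C^n:|v|<\delta\}$ with $\delta$ small. Consider the smooth evaluation map $\Phi(v,x)=\phi^1_{H_v}(x)$ in scenario 1, resp.\ $\Phi(v,x,w,t)=(\phi^1_{H_v}(x),\iota_t(w))\in V\times V$ in scenario 2. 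At each point of $\Phi^{-1}(\Sigma_2)$, resp.\ $\Phi^{-1}(\Delta_V)$, one of three situations occurs, and each grants transversality of $\Phi$ there: if $x$ lies in a fixed compact neighbourhood of $K_0$ inside $U$, then $\phi^1_{H_v}(x)=x+v$ and $\partial_v\Phi$ is onto $T_yV$ with $y=\phi^1_{H_v}(x)$ (resp.\ has image $T_yV\times\{0\}$, which together with $T\Delta_V$ spans $T(V\times V)$); if $x\notin\supp\chi$, then $\Phi$ is independent of $v$ and $x$ lies outside $K_0$, so the required transversality holds by hypothesis; otherwise $x$ lies in $\supp\chi$ but outside a neighbourhood of $K_0$, a compact region where $\Sigma_1\pitchfork\Sigma_2$, and transversality of $\phi^1_{H_v}(\Sigma_1)$ with $\Sigma_2$ persists there once $\delta$ is small. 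Thus $\Phi\pitchfork\Sigma_2$, resp.\ $\Phi\pitchfork\Delta_V$, and the parametric transversality theorem yields a dense set of small $v\in P$ for which $\phi^1_{H_v}(\Sigma_1)$ is transverse to $\Sigma_2$, resp.\ disjoint from all $\iota_t(\Sigma_2)$. The step that needs genuine care, and that I expect to be the main obstacle, is to make these three regional arguments mutually compatible, i.e.\ to fix $U$, the intermediate neighbourhood of $K_0$, and $\delta$ so that the three cases really exhaust $\Phi^{-1}(\Sigma_2)$, resp.\ $\Phi^{-1}(\Delta_V)$. In scenario 2 this rests on a compactness argument: properness of $\Sigma_1$ and of the family $\iota_t$, together with the uniform separation near $\partial V$, forces every $x$ with $\phi^1_{H_v}(x)\in\iota_t(\Sigma_2)$ (for $v\in P$) into an arbitrarily small neighbourhood of $K_0$ as $\delta\to0$, which is what lets $\chi$ and $\delta$ be chosen consistently.
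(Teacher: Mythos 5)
Your proof is correct, and it is close in spirit to the paper's but uses a genuinely different mechanism at the key step. Both arguments perturb by a cut-off translation flow $\phi^1_{H_v}$, $H_v=\chi\,\omega_\st(v,\cdot)$, and both reduce scenario 2 to a transversality statement (you phrase it as transversality to $\Delta_V\subset V\times V$; the paper phrases both scenarios at once as a single claim about smooth proper maps $f_1:\Sigma_1\to V$, $f_2:\Sigma_2\to V$ with $f_1\pitchfork f_2$ near $\partial V$, taking $f_2(w,t)=\iota_t(w)$ for scenario 2). The difference is how the good vector $v$ is produced. The paper applies Sard's theorem directly to the difference map $F(w_1,w_2)=f_2(w_2)-f_1(w_1)$: a regular value $v$ of $F$ near $0$ is precisely a vector for which the \emph{global} translation $f_1+v$ is transverse to $f_2$, and the remaining work is only the observation that the cut-off flow agrees with the translation on $\op(K)$ and is $\cc^1$-close to the identity elsewhere (where transversality already held and is $\cc^1$-open). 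You instead apply the parametric transversality theorem to the evaluation map $\Phi(v,x)=\phi^1_{H_v}(x)$ of the \emph{already cut off} flow, which forces you to verify transversality of $\Phi$ by hand via the three regional cases. Your case analysis is sound, and you correctly identify the point that needs care (consistency of $U$, the intermediate neighbourhood of $K_0$, and $\delta$), but note that this is exactly the openness-of-transversality step that the paper also uses; the paper's route avoids the three-case dichotomy because $v$ already works for the unlocalised translation. In short: same cut-off translation, but Sard on the difference map (paper) vs.\ parametric transversality on the evaluation map (yours); the former is more economical, the latter more explicit about where the surjectivity of $\partial_v\Phi$ is actually used.
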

\begin{proof}
For both statements, it is enough to show the following claim: if $ \Sigma_1 $, $ \Sigma_2 $ are smooth manifolds (possibly with boundary), and if $ f_1 : \Sigma_1 \rightarrow V $ and $ f_2 : \Sigma_2 \rightarrow V $ are smooth proper maps such that $ f_1 \pitchfork f_2 $ near $ \partial V $, then there exists an arbitrarily small Hamiltonian flow $ (\phi^t)_{t \in [0,1]} $ with compact support in $V$, such that $\phi^1 \circ f_1 \pitchfork f_2 $. Indeed, the first statement of the lemma readily follows from this, and for the second statement we can apply the claim with maps the maps $ f_1 = \id : \Sigma_1 \rightarrow V $ and $ f_2 : \Sigma_2 \times [0,1] \rightarrow V $, $ f_2(w,t) = \iota_t(w) $.

Now let us show the above claim. Assume that $ \Sigma_1 $, $ \Sigma_2 $ are smooth manifolds (possibly with boundary), and let $ f_1 : \Sigma_1 \rightarrow V $ and $ f_2 : \Sigma_2 \rightarrow V $ be smooth maps such that $ f_1 \pitchfork f_2 $ on $ V \setminus K $ where $ K \subset V $ is a compact subset. Pick a smooth compactly supported function $ h: V \rightarrow \R $ such that $ h = 1 $ on a \nbd of $ K $. Now define the smooth map $ F : \Sigma_1 \times \Sigma_2 \rightarrow \C^n $ by $ F(w_1,w_2) = f_2(w_2)-f_1(w_1) $. Then by the Sard theorem, the set of critical values of $ F $ has measure zero. Hence there exist arbitrarily small (in norm) regular critical values $ v \in \C^n $ of $ F $. Picking such a value $ v $, define the autonomous Hamiltonian function $ H : V \rightarrow \R $ by $ H(z) = h(z) \omega_{std}(v,z) $, where $ \omega_{std} $ is the standard symplectic form of $ \C^n $. Then its Hamiltonian flow verifies $ \phi_H^t (z) = z + v $ for $ z \in \op(K) $, and it is now easy to see that $ \phi_H^1 \circ f_1 \pitchfork f_2 $ (provided that $ v $ is sufficiently close to the origin).
\end{proof} 

We finally state a version of theorem \ref{thm:hpiso} which we will use later on:
  \begin{prop}\label{prop:hpiso-proper} Let $V\subset \R^{2n}$ be an open set, $u_0,u_1:\rond D{}^{l}\times[-1,1]^{k-l}\hra V$ be proper subcritical isotropic embeddings which coincide on $\op(\partial D^{l}\times[-1,1]^{k-l})$,  such that $ u_0 $ and $ u_1 $ are homotopic  in $V$ relative to $\op(\partial D^{l}\times[-1,1]^{k-l})$, and moreover their differentials $ du_0 $, $ du_1 $ are homotopic in $G^\iso(k,n)$ relative to $\op(\partial D^{l}\times[-1,1]^{k-l})$. We fix such a relative homotopy 
 $G:[0,1]\times \rond D{}^{l}\times[-1,1]^{k-l}\to G^\iso(k,n)$ between $du_0$ and $du_1$.
 If $l=1$, we further assume that the curves given by restrictions of $ u_0 $ and $ u_1 $ to $ \rond D{}^{1} \times \{ 0 \} = (-1,1) \times \{ 0 \} \subset \R^k $ have the same actions, i.e. for a $1$-form $\lambda$ which is a primitive of $\om$ in $V$, 
 $$
 \int_{ (-1,1) \times\{0\}} u_1^*\lambda - u_0^*\lambda = 0. 
 $$
Then there exists a Hamiltonian isotopy $(\phi^t)$ with compact support in $V$ such that $\phi^1\circ u_0=u_1$  and for the induced isotropic isotopy $ u_t = \phi^t \circ u_0 $, $\{du_t\}$ is homotopic to $ G $ rel $\op(\partial D^{l}\times[-1,1]^{k-l})$ and $\{0,1\}$.
\end{prop}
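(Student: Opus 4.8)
The plan is to upgrade the existential $h$-principle of Theorem \ref{thm:hpiso}(b) to an ambient Hamiltonian statement by appealing to Weinstein's isotropic neighborhood theorem. First I would apply Theorem \ref{thm:hpiso}(b): using the given relative homotopy $\rho_t$ between $u_0$ and $u_1$ and the relative homotopy $G$ between $du_0$ and $du_1$, I obtain an isotropic isotopy $u_t:\rond D{}^l\times[-1,1]^{k-l}\hra V$ rel $\op(\partial D^l\times[-1,1]^{k-l})$, with $u_0$ and $u_1$ the given embeddings, and with $\{du_t\}$ homotopic to $G$ rel the boundary region and $\{0,1\}$. The point of the action hypothesis when $l=1$ is exactly to ensure that this isotropic isotopy can be generated by a Hamiltonian flow: an isotropic isotopy extends to an ambient Hamiltonian isotopy precisely when the "flux"/action obstruction vanishes, which for a subcritical isotropic disc of dimension $\geq 2$ is automatic (the disc is simply connected) but for $l=1$ (an annulus $\times$ cube, whose core is a circle) requires the closed curve $u_1|_{(-1,1)\times\{0\}}$ and $u_0|_{(-1,1)\times\{0\}}$ to enclose the same $\lambda$-action.

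Next I would convert the isotropic isotopy $(u_t)$ into an ambient Hamiltonian isotopy. The standard mechanism is: at each time $t$, the "velocity" vector field along $u_t$, being tangent to an isotropic submanifold, can be written via $\iota_{X_t}\om = dH_t$ near $u_t(\rond D{}^l\times[-1,1]^{k-l})$ for a locally defined function $H_t$, where the vanishing-action condition guarantees $H_t$ is well-defined (single-valued) on a neighborhood of the image; one then cuts off $H_t$ to a compactly supported Hamiltonian on $V$. Because the embeddings are proper and the isotopy is relative to $\op(\partial D^l\times[-1,1]^{k-l})$, the cutoff can be chosen supported in a small neighborhood of $\bigcup_t u_t(D^k)$, which is a compact subset of $V$. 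This produces a compactly supported Hamiltonian isotopy $(\phi^t)$ with $\phi^t\circ u_0=u_t$, hence $\phi^1\circ u_0=u_1$. Since $u_t=\phi^t\circ u_0$ is literally the isotropic isotopy produced above, the homotopy statement $\{du_t\}\simeq G$ rel $\op(\partial D^l\times[-1,1]^{k-l})$ and $\{0,1\}$ is inherited directly from Theorem \ref{thm:hpiso}(b).

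The main obstacle is the passage from "isotropic isotopy" to "ambient Hamiltonian isotopy with control on the support." Two subtleties must be handled carefully: first, that the locally defined generating Hamiltonian $H_t$ is globally single-valued on a neighborhood of $u_t$'s image — this is where the $l=1$ action hypothesis enters, and for $l\geq 2$ one must note that $\rond D{}^l\times[-1,1]^{k-l}$ is simply connected so no such hypothesis is needed; second, that the neighborhoods of the images $u_t(D^k)$ can be chosen uniformly (in $t$) small and disjoint from $\partial V$, using properness and the fact that the isotopy is stationary near $\partial D^l\times[-1,1]^{k-l}$, so that the cutoff stays compactly supported in $V$ throughout. A clean way to organize this is via the relative isotropic neighborhood theorem: the isotropic embeddings $u_t$ extend to symplectic embeddings of a fixed model neighborhood (the conormal-type bundle $T^*(\rond D{}^l\times[-1,1]^{k-l})\oplus\underline{\C^{n-k}}$), depending smoothly on $t$ and fixed near the boundary; pulling back, $(u_t)$ becomes the zero-section isotopy inside this model, which is manifestly Hamiltonian there by an explicit (compactly supported after cutoff) Hamiltonian, and then one pushes forward. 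I would also invoke Lemma \ref{le:hamgic} if needed to put things in general position so these neighborhoods behave well, though with the relative setup it should not be strictly necessary.
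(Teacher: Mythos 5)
Your overall route is the same as the paper's: apply the parametric relative $h$-principle (theorem \ref{thm:hpiso}.b) to get an isotropic isotopy $u_t$ rel the collar, then realize it by an ambient Hamiltonian isotopy by primitivizing the velocity field and cutting off near $\bigcup_t u_t(D)$. However, your treatment of the case $l=1$ --- which is the whole point of the extra action hypothesis --- has a genuine gap, and your diagnosis of where that hypothesis enters is wrong. The domain $\rond D{}^{1}\times[-1,1]^{k-1}=(-1,1)\times[-1,1]^{k-1}$ is contractible (it is not ``an annulus $\times$ cube''), and $(-1,1)\times\{0\}$ is an arc, not a closed curve; hence the generating function $H_t$ with $dH_t=\iota_{X_t}\omega$ along $u_t$ is automatically single-valued for every $l$. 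The true obstruction lies in the cutoff: since the embeddings are proper and the Hamiltonian must be compactly supported in $V$, one needs $H_t$ to vanish identically (not merely have vanishing differential) on the part of the image where the isotopy is stationary, i.e.\ on $u_0(\op(\partial D^{l}\times[-1,1]^{k-l}))$. On that set $H_t$ is locally constant; for $l\geq 2$ the set is connected and the single constant can be normalized to $0$, but for $l=1$ it has two components carrying constants $c_t^{\pm}$ with $c_t^{+}-c_t^{-}=\tfrac{d}{dt}\alpha_t$, where $\alpha_t=\int_{(-1,1)\times\{0\}}u_t^*\lambda-u_0^*\lambda$. If these constants differ, the cut-off Hamiltonian $\chi H_t$ no longer generates $u_t$ near one of the two ends (there $d(\chi H_t)=c_t^{\pm}\,d\chi\neq 0$), so $\phi^t\circ u_0=u_t$ fails.

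The decisive point you miss is that the hypothesis only gives $\alpha_0=\alpha_1=0$: the isotopy produced by theorem \ref{thm:hpiso}.b can perfectly well have $\alpha_t\neq 0$ for intermediate $t$, and then no normalization of $H_t$ works. The paper therefore first \emph{corrects} the isotopy by composing with $\phi_K^{-\alpha_t}$, where $K$ is an auxiliary Hamiltonian equal to $0$ near one end of the image and to $1$ near the other; this keeps the endpoints (using $\alpha_1=0$) and makes all intermediate actions vanish, and only then does the cutoff go through. Your plan has no such step. A secondary omission: theorem \ref{thm:hpiso} is stated for compact cubes, so the paper applies it on a compact core $D(1-\eps_1)\times[-1,1]^{k-l}$ and glues the resulting isotopy with the constant isotopy $u_0$ on the outer collar; for the glued maps to remain embeddings one genuinely needs the general-position step of lemma \ref{le:hamgic} (the disjointness condition \eqref{eq:gicpos}), which you dismiss as ``not strictly necessary.''
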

\noindent{\it Proof:} Consider the closed ball $ D := D^l=\overline{B}^l(0,1)$, denote $D(r) :=\overline{B}^l(0,r)$, $A_{\eps',\eps}:=D(1-\eps') \priv \rond D{}(1-\eps) $ and $A_\eps:= \rond D{} \priv \rond D{}(1-\eps) $. By assumption, there exists $\eps_0 > 0 $ such that $u_0,u_1$ coincide on $ A_{\eps_0}\times[-1,1]^{k-l}$ and moreover the homotopy $ G $ is relative to $ A_{\eps_0}\times[-1,1]^{k-l}$ and $ \{0,1\} $. We fix $ 0 < \eps_1<\eps_0$. 

The restrictions of the maps $u_0,u_1$ to $ D(1-\eps_1)\times[-1,1]^{k-l}$ coincide on $ A := A_{\eps_1,\eps_0}\times[-1,1]^{k-l} $, and $G$ provides a homotopy between their differentials relative to $A $. By theorem \ref{thm:hpiso}, there exists a compactly supported time-dependent Hamiltonian function $ H : [0,1] \times V \rightarrow \R $ whose flow $\phi_H^t$ isotopes $u_{0|D(1-\eps_1) \times [-1,1]^{k-l}}$ to $u_1$ relative to  $A $, with $\{d(\phi_H^t\circ u_0)_{|D(1-\eps_1)\times [-1,1]^{k-l}}\}$ homotopic to $G$ relative to $A$ and $\{0,1\}$. The subcritical assumption allows us to apply lemma \ref{le:hamgic} and assume that 
\begin{equation}\label{eq:gicpos}
\phi_H^t\circ u_0(D(1-\eps_0)\times [-1,1]^{k-l})\cap u_0(A_{\eps_1}\times[-1,1]^{k-l})=\emptyset
\end{equation}
for every $ t \in [0,1] $. Since we moreover have
\begin{equation}\label{eq:relcond}
\phi_H^t\circ u_{0|A_{\eps_1,\eps_0}\times[-1,1]^{k-l}}=u_0, 
\end{equation}
we obtain the family of embeddings 
\fonction{u_t}{\rond D{}^l\times[-1,1]^{k-l}}{V}{(x,y)}{\left\{
\begin{array}{ll}
\phi_H^t\circ u_0(x,y) & \text{ if } x\in D(1-\eps_1),\\
u_0(x,y)& \text{ if } x\in A_{\eps_1}
\end{array}\right.}
that provides an isotropic isotopy between $u_0$ and $u_1$ relative to $ A_{\eps_0}\times[-1,1]^{k-l} $, whose differential realizes $G$. At this point a distinction is necessary. \\
\sbull If $l\geq 2$, $A$ is connected, pointwise fixed by $\phi_H^t$, hence the differential of $H(t,\cdot)$ vanishes on $ u_0(A) $ and in particular $H(t,\cdot)$ assumes a constant value $c_t$ on $u_0(A)$. The Hamiltonian $H'(t,\cdot) := H(t,\cdot) -c_t$ therefore vanishes on $ u_0(A) $ together with its differential,  and induces the same isotopy between $u_{0|D(1-\eps_1)\times [-1,1]^{k-l}}$ and $u_1$ relative to $A$. Then, $ (\ref{eq:gicpos}) $  and $ (\ref{eq:relcond}) $ guarantee that if we cut $ H' $ off away from a sufficiently small neighborhood of $ \cup_{t \in [0,1]} u_t(D(1-\eps_0) \times [-1,1]^{k-l}) $ then we obtain a compactly supported in $ V $ Hamiltonian function $ F $ such that $ \phi_F^t \circ u_0 = u_t $ for each $ t \in [0,1] $. \\
\sbull If $l=1$, $A$ is not connected and the above argument cannot be carried out unless we ensure that
\begin{equation} \label{eq:action-def}
\alpha_t:=\int_{(-1,1)\times\{0\}}u_t^*\lambda-u_0^*\lambda
\end{equation}
vanishes. Since however this is not automatic because $A$ is no longer connected, we first alter $u_t$ to another isotopy $u_t'$ that satisfies this property. 

By assumption we have $ \alpha_0 = \alpha_1 = 0 $. Let $ K : V \rightarrow \R $ be a compactly supported Hamiltonian function such that 
\begin{equation} \label{eq:action-cond}
K_{|\op(u_0([-1+\eps_1,-1+\eps_0] \times [-1,1]^{k-1}))} \equiv 0 \,\, \text{ and } \,\, K_{|\op(u_0([1-\eps_0,1-\eps_1] \times [-1,1]^{k-1}))} \equiv 1 . 
\end{equation}
Then $\tilde u_t:=\phi_K^{-\alpha_t}\circ u_t$ agrees with $u_0$ on $ A = \left( [-1+\eps_1,-1+\eps_0] \cup [1-\eps_0,1-\eps_1] \right) \times [-1,1]^{k-1} $, 
we have $ \tilde u_0 = u_0 $, and by $ \alpha_1 = 0 $ we moreover have $ \tilde u_1 = u_1 $. In addition, by $ (\ref{eq:action-cond}) $ and $ (\ref{eq:action-def}) $ we get 
\begin{equation} \label{eq:action-cond-true}
\begin{gathered}
\int_{(-1+\eps_1,1-\eps_1)\times\{0\}}\tilde u_t^*\lambda - u_0 ^* \lambda
= -\alpha_t + \int_{(-1+\eps_1,1-\eps_1)\times\{0\}} u_t^*\lambda - u_0 ^* \lambda = 0.
\end{gathered}
\end{equation}
for each $ t \in [0,1] $. Now, by applying lemma \ref{le:hamgic} we may assume that 
\begin{equation} \label{eq:gicpos2}
 \tilde u_t  ((-1+\eps_0,1-\eps_0)\times [-1,1]^{k-1})\cap u_0( A_{\eps_1})=\emptyset 
 \end{equation}
for every $ t \in [0,1] $, where $ A_{\eps_1} = ((-1,-1+\eps_1] \cup [1-\eps_1,1)) \times[-1,1]^{k-1} $. Since we moreover have $ \tilde u_t = u_0 $ on $ A $, we can define the family of embeddings 
\fonction{u_t'}{(-1,1)\times[-1,1]^{k-1}}{V}{(x,y)}{\left\{
\begin{array}{ll}
\tilde u_t(x,y) & \text{ if } x\in (-1+\eps_1,1-\eps_1),\\
u_0(x,y)& \text{ if } x\in (-1,-1+\eps_1] \cup [1-\eps_1,1)
\end{array}\right.}
that provides an isotropic isotopy between $u_0$ and $u_1$ relative to $ A_{\eps_0}\times[-1,1]^{k-1} $. To see that the path of differentials $ d  u_t' $ realizes $G$, consider the family of isotropic {\em immersions} $ (u_{s,t}')_{s,t \in [0,1]} $ given by
\fonction{u_{t,s}'}{(-1,1)\times[-1,1]^{k-1}}{V}{(x,y)}{\left\{
\begin{array}{ll}
\phi_K^{-s\alpha_t}\circ u_t(x,y) & \text{ if } x\in [-1+\eps_1,1-\eps_1],\\
u_0(x,y)& \text{ if } x\in [-1,-1+\eps_1] \cup [1-\eps_1,1]
\end{array}\right.}
and then the induced family of differentials $ du_{s,t}' $ provides us a homotopy between the path $ du_t = du_{0,t}' $ and $ du_t' = du_{1,t}' $ relative to $ A_{\eps_0}\times[-1,1]^{k-1} $ and $ \{ 0,1 \} $, while the path $ du_t $ is in turn homotopic to $ G $ relative to $ A_{\eps_0}\times[-1,1]^{k-1} $ and $ \{ 0,1 \} $.

Now we can proceed similarly as in the previous case (of $ l \geq 2 $). Denoting by $ \widetilde H $ the Hamiltonian function of the flow $ \phi_K^{-\alpha_t} \circ \phi_{H}^t $, we have $ u_t' = \phi_{\widetilde H}^t \circ u_0 $ on $ [-1+\eps_1,1-\eps_1] \times [-1,1]^{k-1} $. Then by $ (\ref{eq:action-cond-true}) $ we have 
\begin{equation*} 
\int_{(-1+\eps_1,1-\eps_1)\times\{0\}} (u_t')^*\lambda - u_0 ^* \lambda = 0
\end{equation*}
for each $ t \in [0,1] $, and moreover the flow $ \phi_{\widetilde H}^t =  \phi_K^{-\alpha_t} \circ \phi_H^t  $ is the identity when restricted to $ u_0(A) $ (where $ A = \left( [-1+\eps_1,-1+\eps_0] \cup [1-\eps_0,1-\eps_1] \right) \times [-1,1]^{k-1} $), therefore $ \widetilde H(t,\cdot) $ assumes a constant value $ c_t $ on $ u_0(A) $ and its differential vanishes on $ u_0(A) $, for each $ t $. Hence denoting $ H' (t,\cdot) := \widetilde H(t,\cdot)-c_t $, the transversality property $ (\ref{eq:gicpos2}) $ implies that a Hamiltonian function $ F $ obtained as a cutoff of $ H' $ away from a sufficiently small neighborhood of $ \cup_{t \in [0,1]} u_t'([-1+\eps_0, 1-\eps_0] \times [-1,1]^{k-1}) $,  satisfies $ \phi_F^t \circ u_0 = u_t' $ for each $ t \in [0,1] $. \cqfd

\subsection{Proof of theorem \ref{thm:qhiso1}}
Let $k<n$, $D^k:=[-1,1]^k, D^k(\mu):=[-1-\mu,1+\mu]^k$, $u_0,u_1:D^k\hra V\subset \C^n$ be smooth isotropic embeddings, and $F:D^k\times [0,1]\to V$ a homotopy between $u_0,u_1$ with $\size F<\eps$. We need to prove that there exists a  {\it Hamiltonian} isotopy of size $2\eps$, which takes $u_0$ to $u_1$ on $D^k$. 

Before passing to the proof, we need to modify slightly the framework. First, extend the isotropic embeddings and the homotopy to slightly larger isotropic embeddings: $u_0,u_1:D^k(\mu)\hra V$, $F:D^k(\mu)\times[0,1]\to V$, where $ D^k(\mu) = [-\mu,1+\mu]^k $. By lemma \ref{le:hamgic}, we do not loose generality if we assume that the images of $u_0$ and $u_1$ are disjoint (since $k<n$), which we do henceforth. Next, the homotopy $F$ can be turned into a more convenient object:

\begin{lemma}[see {\cite[lemma A.1]{buop}}]
There exists a smooth embedding $\tilde F:D^k(\mu)\times [0,1]\hra V$, with $\tilde F(x,0)=u_0(x)$, $\tilde F(x,1)=u_1(x)$, with $\diam(\tilde F(\{x\}\times [0,1]))<2\eps$  for all $x \in D^k(\mu)$. In other words, $\tilde F$ has size $2\eps$ when considered as a homotopy between $u_0,u_1$. 
\end{lemma}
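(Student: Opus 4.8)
The plan is to reduce the homotopy $F$ to an embedded one by a fairly standard general-position argument, while keeping track of the size. First I would triangulate or cellulate $D^k(\mu) \times [0,1]$ finely and perturb $F$ to a smooth map, $C^0$-close to the original, using a partition of unity; since $\size F < \eps$ strictly, a small enough smooth perturbation still has size $< 2\eps$ (in fact we have a full $\eps$ of room to spare, which is why the target bound is $2\eps$ and not $\eps + \delta$). The point of building in this slack is that subsequent perturbations — to achieve transversality/injectivity — can be absorbed without further bookkeeping. After smoothing, $F : D^k(\mu)\times[0,1] \to V$ is a smooth map with $F(\cdot,0) = u_0$, $F(\cdot,1) = u_1$, and $\diam F(\{x\}\times[0,1]) < 2\eps$ for all $x$.

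Next I would make $F$ an immersion and then an embedding. The source has dimension $k+1$ and the target $2n$; since $k < n$ we have $2(k+1) \leqslant 2n$ precisely when $k+1 \leqslant n$, i.e. $k \leqslant n-1$, which holds — so for generic perturbations $F$ is an immersion, and since $2(k+1) \leqslant 2n$ the self-intersection locus has codimension $2n - 2(k+1) \geqslant 0$; to actually get injectivity one wants the strict inequality $2(k+1) < 2n$, equivalently $k < n-1$. In the boundary case $k = n-1$ one loses one dimension and the double-point set of a generic map is $0$-dimensional, so one must argue slightly more carefully — but note that $u_0, u_1$ are genuine embeddings and (by the preliminary reduction via lemma \ref{le:hamgic}) have disjoint images, so the only possible double points come from interior strata, and these can be removed by the Whitney trick or, more simply here, by composing with a generic small diffeomorphism of $V$ supported away from $u_0(D^k(\mu)) \cup u_1(D^k(\mu))$, since the restrictions of $F$ to $D^k(\mu)\times\{0,1\}$ are already embeddings that we must not touch. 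All of these perturbations are $C^0$-small, hence preserve the size bound $2\eps$. The relative version (fixing $F$ near $D^k(\mu)\times\{0,1\}$, and near $\partial D^k \times [0,1]$ if one wants the boundary-relative statement) is handled by choosing the cutoff functions to vanish there.

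The main obstacle is precisely the injectivity step in the borderline dimension, and more generally ensuring that the perturbation achieving transversality does not disturb the two ends $t=0,1$ where $F$ is already a prescribed embedding: one cannot perturb freely there, so the argument must be set up so that near $t = 0$ and $t = 1$ the map $F$ is already an embedding of a collar $D^k(\mu)\times[0,\delta)$ (resp.\ $D^k(\mu)\times(1-\delta,1]$) — which it is, since $u_0, u_1$ are embeddings and an embedding of a compact manifold remains an embedding under small perturbation, so a short collar extension of $u_i$ is automatically embedded — and then only perturb on the interior $D^k(\mu)\times[\delta,1-\delta]$ rel boundary. Once $\tilde F$ is an embedding with size $< 2\eps$, this is the claimed statement; I would remark that this is essentially \cite[lemma A.1]{buop}, so in the write-up one could simply invoke that lemma, noting that its proof applies verbatim with $u_0, u_1$ isotropic (the lemma makes no use of the symplectic structure, only of the dimension inequality $k < n$ and the fact that $u_0, u_1$ are disjoint embeddings).
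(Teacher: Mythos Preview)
The paper does not prove this lemma at all; it simply records the citation to \cite[Lemma~A.1]{buop}, which is exactly what you recommend in your closing sentence. In that final respect your proposal and the paper coincide.

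Your attempted self-contained argument, however, has a genuine error in the borderline case $k=n-1$. You propose to remove the isolated double points of a generic immersion ``by the Whitney trick or, more simply here, by composing with a generic small diffeomorphism of $V$ supported away from $u_0(D^k(\mu)) \cup u_1(D^k(\mu))$''. The second option cannot work: if $F(p)=F(q)$ with $p\neq q$, then $(\psi\circ F)(p)=(\psi\circ F)(q)$ for \emph{any} diffeomorphism $\psi$ of the target, so post-composition with a diffeomorphism of $V$ never removes a self-intersection. The Whitney trick is a legitimate alternative when the ambient dimension satisfies $2n\geq 5$ --- after adding $\cc^0$-small local kinks to make the signed count of double points vanish, the Whitney discs can be chosen inside small balls near the isolated double points, so the move is $\cc^0$-small and the bound $2\eps$ survives --- but it is unavailable when $n=2$, $k=1$: one is then embedding a $2$-disc in $\R^4$, and Whitney discs generically meet the immersed surface. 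Your sketch therefore does not cover the full range $k<n$ claimed by the lemma. For $k\leq n-2$ your general-position argument is correct and is a reasonable thing to spell out; for $k=n-1$ the honest route is the one both you and the paper adopt --- defer to \cite{buop}.
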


Now $\tilde F$ can be further extended to an embedding, still denoted $\tilde F$, 
$$\tilde F:D^k(\mu)\times [-\mu,1+\mu]\times[-\mu,\mu]^{2n-k-1}\hra V.$$ 

Consider now a regular grid $ \Gamma_0 := \nu \Z^k\cap D^k $ in $ D^k \subset D^k(\mu)$, of step $\nu\ll 1$ (to be specified later), where $ \nu^{-1} \in \mathbb{N} $.
This grid generates a cellular decomposition of $D^k$, whose $l$-skeleton $\Gamma_l$ is the union of the $l$-faces. The set of $k$-faces has a natural integer-valued distance, where the distance between $ k $-faces $x$ and $x'$ is the minimal $ m $ such that there exists a sequence $ x = x_0, x_1, \ldots, x_m = x' $ of $ k $-faces and $ x_j \cap x_{j+1} \neq \emptyset $ for each $ j \in [0,m-1] $ (note that those intersections are not required to be along full $(k-1)$-faces). Fix some $\eta < \nu /2 $, and for each $ x \in \Gamma_0 $, let $ U_x $ be the $\eta$-\nbd of $ \{ x \} \times[0,1]\times \{ 0 \}^{2n-k-1} $ in $ \C^n $, and then denote $ W_x := \tilde F(U_x) $. Similarly, for each $ k $-face $ x_k $, denote by $ U_{x_k} $ the $\eta$-\nbd of $ x_k \times[0,1]\times \{ 0 \}^{2n-k-1} $ in $ \C^n $, and then put $ W_{x_k} := \tilde F(U_{x_k}) $. For a $ k $-face $ x $ and $ m \geqslant 0 $ we denote $W^{m}_x:=\cup W_{x'}$,  where the union is over all the $k$-faces $x'$ which are at distance at most $m$ from $x$. Note that $ W_x^0 = W_x $, and that $ W_x^m $ is a topological ball. Finally, we put $ W : = \cup_{x} W_x \subset V $, where the union is over all the $k$-faces. Hence, $ W = \tilde F(U) $ where $ U $ is the $ \eta $-neighborhood of
$ D^k \times [0,1] \times \{ 0 \}^{2n-k-1} $ in $ \C^n $.

We will prove theorem \ref{thm:qhiso1} by successively isotopying the $l$-skeleton with a control on each isotopy. Precisely, arguing by induction on $l$, we prove the following:

\begin{prop}\label{prop:induction} There exist Hamiltonian isotopies $(\Psi^t_l)$, $l\in[0,k]$ with support in $ W $, and modified embeddings $v_0:=\Psi^1_0\circ u_0$, $v_l:=\Psi^1_l\circ v_{l-1}$, such that 
\begin{itemize}
\item[($\ci 1$)] $v_l\equiv u_1$ on a \nbd of the $l$-skeleton $\Gamma_l$, for every $ l \in [0,k] $.
\item[($\ci 2$)] $v_l(x)\subset W_{x}^{3^{l}-1} $ for each $ k $-face $ x $ and every $ l \in [0,k-1] $.
\item[($\ci 3$)] $ \Psi^t_l(W_x) \subset W_x^{2 \cdot 3^{l-1}} $ for each $ k $-face $ x $ and $ l \in [1,k-1] $, \\
                        and $ \Psi^t_0(W_x) \subset W_x $, $ \Psi^t_k(W_x) \subset W_x^{3^{k(k+1)}} $, for every $ k $-face $ x $.
\item[($\ci 4$)] $v_l(\rond x_{l+1})\cap u_1(\rond x{}_{l+1}')=\emptyset$  for every pair of distinct $(l+1)$-faces, $\forall l\in [0,k-1]$.
\item[($\ci 5$)] $d v_l$ and $du_1$ are homotopic rel $\op(\Gamma_l)$ among maps $D^k(\mu) \to G^\iso(k,n)$, for each $ l\in [0,k-1]$.
\end{itemize} 
\end{prop}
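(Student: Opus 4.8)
The plan is to prove Proposition~\ref{prop:induction} by induction on $ l $, building the Hamiltonian isotopies $ (\Psi^t_l) $ one skeleton at a time. At the base case $ l = 0 $, I would first use Lemma~\ref{le:hamgic} (scenario 2) to push $ u_0 $ by a tiny Hamiltonian flow supported in $ W $ so that its image becomes disjoint from $ u_1(\Gamma_0) $ and, more generally, so that near each vertex $ x $ of the grid the two embeddings are in general position; then, working locally inside each tube $ W_x $, I would apply the relative $ h $-principle (Theorem~\ref{thm:hpiso}, or rather its proper version Proposition~\ref{prop:hpiso-proper}) to isotope the image of $ u_0 $ near each vertex to agree with $ u_1 $ there. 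Because the vertices are at pairwise distance $ \geqslant 1 $ in the face metric (each vertex lies in $ W_x $ only), these local modifications have disjoint supports and can be performed simultaneously by a single Hamiltonian $ \Psi^t_0 $ with $ \Psi^t_0(W_x) \subset W_x $, yielding $ (\ci 3) $ for $ l = 0 $; conditions $ (\ci 1), (\ci 2), (\ci 4), (\ci 5) $ for $ l = 0 $ then follow from the $ h $-principle output together with the general-position arrangement (for $ (\ci 5) $ one needs that $ du_0 $ and $ du_1 $ are homotopic rel $ \op(\Gamma_0) $, which one arranges by choosing the local homotopies compatibly, using that the homotopy $ F $ provides a homotopy of maps to $ V $ and the $ h $-principle lets us realize any prescribed framing homotopy).

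For the inductive step, assume $ v_{l-1} $ satisfies $ (\ci 1)$--$(\ci 5) $ at level $ l-1 $. The goal is to upgrade agreement with $ u_1 $ from a neighborhood of $ \Gamma_{l-1} $ to a neighborhood of $ \Gamma_l $. I would treat each $ l $-face $ \sigma $ of the grid separately: on $ \op(\partial \sigma) $ we already have $ v_{l-1} \equiv u_1 $ by $ (\ci 1) $, and by $ (\ci 5) $ the differentials $ dv_{l-1} $ and $ du_1 $ are homotopic rel $ \op(\Gamma_{l-1}) \supset \op(\partial\sigma) $; restricting such a homotopy gives a framing homotopy $ G $ over $ \sigma $ rel $ \op(\partial\sigma) $. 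When $ l = 1 $ one must additionally match the actions of $ v_{l-1}|_\sigma $ and $ u_1|_\sigma $ — this is exactly the action hypothesis in Proposition~\ref{prop:hpiso-proper}; I would arrange this by a preliminary action-correcting Hamiltonian exactly as in the $ l=1 $ case of that proposition's proof (or observe the actions already agree because $ v_{l-1} $ and $ u_1 $ are homotopic rel endpoints of the arc), and for $ l \geqslant 2 $ the face $ \sigma $ has connected complement of $ \partial\sigma $ so no action condition is needed. Then Proposition~\ref{prop:hpiso-proper} (applied with the roles of $ \rond D{}^l \times [-1,1]^{k-l} $ played by a neighborhood of $ \sigma $ in $ D^k $, suitably identified) provides a compactly supported Hamiltonian flow, supported in an arbitrarily small neighborhood of $ \cup_t (\text{the swept image}) $, that moves $ v_{l-1}|_{\op(\sigma)} $ to $ u_1 $ while realizing $ G $. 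Before invoking it, I would use Lemma~\ref{le:hamgic} to put $ v_{l-1}(\sigma) $ in general position with respect to $ u_1(\Gamma_l) $ and with respect to the images of the other faces, so that $ (\ci 4) $ is achieved at level $ l $ and so that the supports for different $ l $-faces $ \sigma $, together with the already-modified region near $ \Gamma_{l-1} $, remain controlled.

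The heart of the matter — and the step I expect to be the main obstacle — is the \emph{quantitative} bookkeeping: choosing $ \nu, \eta, \mu $ and the neighborhoods in which the Hamiltonians are supported so that the size estimates $ (\ci 2) $ and $ (\ci 3) $ close up under the induction, with the factor-of-$3$ (respectively $ 2\cdot 3^{l-1} $) growth of the tube-neighborhoods. The point is that when I isotope an $ l $-face $ \sigma $, the swept image of $ v_{l-1}(\sigma) $ — which by $ (\ci 2) $ lives in $ W_x^{3^{l-1}-1} $ for each $ k $-face $ x $ meeting $ \sigma $ — stays inside the union of the $ W_y^{3^{l-1}-1} $ over the $ \leqslant $ (bounded number of) faces $ y $ adjacent to those $ x $'s, and one must check this union is contained in $ W_x^{2\cdot 3^{l-1}} $ to get $ (\ci 3) $, and that the resulting $ v_l $ satisfies $ v_l(x) \subset W_x^{3^l - 1} $ to get $ (\ci 2) $; here one crucially uses that $ \sigma $ has at most $ 2^{k-l} $ adjacent $ k $-faces, each pair of which is at bounded face-distance, and that $ 3^{l-1} + 3^{l-1} + 3^{l-1} = 3^l $, leaving room for the "$ -1 $" and the extra margin. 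I would also need the disjointness of supports \emph{across} the $ l $-faces at a fixed level $ l $ (faces of $ \Gamma_l $ can be close, so this requires that the perturbations near a face $ \sigma $ be supported $ \eta $-close to $ \tilde F(\sigma \times [0,1] \times \{0\}) $, not spilling onto non-adjacent faces), which forces $ \eta $ to be chosen small relative to $ \nu $ but the argument must be uniform over all faces; organizing all these constraints coherently, and in particular getting the final bound $ \Psi^t_k(W_x) \subset W_x^{3^{k(k+1)}} $ for the top skeleton, is where the real care is required. Once Proposition~\ref{prop:induction} is established, the concatenation $ \Psi^t := \Psi^t_k \star \cdots \star \Psi^t_0 $ takes $ u_0 $ to $ v_k = u_1 $ on $ D^k $, and $ (\ci 3) $ bounds its size by the diameter of $ W_x^{3^{k(k+1)}} $, which, since $ W = \tilde F(U) $ with $ \tilde F $ of size $ 2\eps $ and $ \eta, \nu $ chosen small, is $ < 2\eps $ — giving Theorem~\ref{thm:qhiso1}.
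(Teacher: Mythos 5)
Your overall inductive scheme — isotope skeleton by skeleton, isolate each $l$-face inside a small tube, apply a relative $h$-principle (Proposition~\ref{prop:hpiso-proper}) with general-position preparation via Lemma~\ref{le:hamgic}, compose disjointly supported local Hamiltonians, and keep track of the tube-distance blow-up — is exactly the paper's strategy, and most of your plan for $l = 0$ and $0 < l < n-1$ matches the paper (including the need to correct edge actions before passing to the $1$-skeleton; note though that your parenthetical fallback ``the actions already agree because $v_{l-1}$ and $u_1$ are homotopic rel endpoints of the arc'' is false — a continuous homotopy rel endpoints does not preserve $\int \lambda$, which is precisely why the dedicated action-correcting Hamiltonian $\psi_\ca$ is needed).

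The genuine gap is in your treatment of the top skeleton $l = k$. You describe getting the bound $\Psi^t_k(W_x) \subset W_x^{3^{k(k+1)}}$ as a matter of ``careful bookkeeping,'' but when $k = n-1$ the argument you propose actually \emph{breaks}, not merely becomes awkward. For each $l$-face $x_l$ you rely on general position to make the regular neighborhoods $\cv_{x_l} \supset \im\sigma_{x_l}$ pairwise disjoint; since $\im\sigma_{x_l}$ has dimension $l+1$, this requires $2(l+1) < 2n$, i.e.\ $l < n-1$. When $l = k = n-1$ the swept cylinders have dimension $n$, and two such cylinders in $\C^n$ generically meet in points, so they cannot be separated by a small perturbation — the neighborhoods $\cv_{x_k}$ \emph{must} overlap, and you cannot run all the local Hamiltonians $\psi^t_{x_k}$ simultaneously. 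The paper handles this by a qualitatively different argument: by $(\ci 4)$ one can still arrange $\cv_{x_k} \cap u_1(x_k') = \emptyset$ for $x_k \neq x_k'$ (pushing a cylinder off a $k$-dimensional set only needs $(k+1) + k < 2n$), and then one partitions the $k$-faces into $(2\cdot 3^{k-1})^k$ subsets $F_i$ such that any two faces in the same $F_i$ are at grid-distance $\geqslant 2\cdot 3^{k-1}$, so their tubes $W^{3^{k-1}-1}_{x_k}$ are disjoint. One then performs the local isotopies one subset $F_i$ at a time; since $\psi^1_{x_k'}$ fixes $u_1$ near every other $x_k$ (by the arranged disjointness from $u_1(x_k')$), the later stages do not undo the earlier ones. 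The factor $3^{k(k+1)}$ is not bookkeeping slack from a one-shot composition but the accumulated cost of these $\sim 2^k 3^{k(k-1)}$ sequential sweeps. Without this partition idea your proof does not close in the boundary case $k = n-1$, which the theorem's hypothesis $k < n$ does allow.
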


Proposition \ref{prop:induction} readily implies theorem \ref{thm:qhiso1}. Indeed, denoting by $ (\Psi^t)_{t\in[0,1]} $ the (reparamet-rized) concatenation $ \{\Psi^t_k\}\star\dots\star\{\Psi^t_1\} $ of the flows, from ($\ci 3$) we conclude that for each $ k $-face $ x $ and each $ t $ we have $ \Psi^t(W_x) \subset W_x^{3^{k^2+k+1}} $ since $ \left( \sum_{j=1}^{k-1} 2 \cdot 3^j \right) + 3^{k(k+1)} < 
3^{k^2+k+1} $. The flow $ (\Psi^t) $ is supported in $ W = \cup_{x \in \Gamma_k} W_x \subset V $, and if the step $ \nu $ of the grid is chosen to be sufficiently small, then for each $ k $-face $ x $, the diameter of $ W_x^{3^{k^2+k+1}} $ is less than $ 2\eps $. Consequently, the size of the flow $ (\Psi^t)_{t\in [0,1]} $ is less than $ 2\eps $. Moreover, by ($\ci 1 $) we have $ \Psi^1 \circ u_0 = v_k = u_1 $ on $ D^k $.\cqfd

\noindent {\it Proof of proposition \ref{prop:induction}:} As already explained, the proof goes by induction over the dimension of the skeleton $\Gamma_l$. 

Since $ D^k(\mu) $ is contractible, there exists a homotopy $G_t:D^k\to G^\iso(k,n)$ between $du_0$ and $du_1$.
 
\noindent {\bf The $0$-skeleton:} Let $x \in \Gamma_0 $ be a $ 0 $-face, $\rho<\eta$, and $D_\rho(x)$ the $\rho$-\nbd of $x$ in $D^k(\mu)$. Then $u_0(D_\rho(x)),u_1(D_\rho(x))$ both lie in $W_x$, and $\tilde F$ provides an isotopy between $u_{0|D_\rho(x)}$ and $u_{1|D_\rho(x)}$ in $W_x$. By theorem \ref{thm:hpiso}.b), there exists a Hamiltonian isotopy $(\psi^t_x)$ with support in $W_x$, such that $\psi^1_x\circ u_0=u_1$ on $D_\rho(x)$ and $d\psi^t_x\circ du_{0|D_\rho(x)} $ is homotopic to $G_t$ rel $\{0,1\}$. Since $W_{x}\cap W_{x'}=\emptyset$ for different $0$-faces $x,x'$, the isotopies $\psi_x$ have pairwise disjoint supports. 

The flow $\psi_0^t:=\circ_{x} \psi_x^t$ (where the composition runs over all $0$-faces $x$ of $\Gamma$) and the disc $ v_0' := \psi_0^1 \circ u_0 $ verify ($\ci 1$) by construction. 
Moreover, the flow satisfies ($\ci 3$) because it is supported inside the disjoint union $ \cup_{x \in \Gamma_0} W_x $, and for every $ x \in \Gamma_0 $ and $ k $-face $ x' $ we have either $ W_x \subset W_{x'} $ or $ W_{x} \cap W_{x'} = \emptyset $. In addition, $d\psi_0^t \circ  du_{0|\op(\Gamma_0)} $ is homotopic to $G_t$ rel $\{0,1\}$. 
In the next steps of the proof we will need proposition \ref{prop:hpiso-proper} for performing relative isotopies {\it via localized Hamiltonians}. Note however that in the case of $l=1$, in addition to the formal obstructions, the proposition requires the actions of the edges to coincide. Hence in order to proceed, {\it we have to adjust the actions of the edges}.

Let us show that there exists a Hamiltonian isotopy $(\psi^t_\ca)$, supported in an arbitrarily small neighborhood $ v_0'(\Gamma_0)=u_1(\Gamma_0) $, whose flow is the identity on a (smaller) \nbd of $ \Gamma_0 $, such that
$$
\ca\big(\psi_\ca^1\circ  {v_0'} \circ \gamma \big):=\int_{\psi^1_\ca\circ  {v_0'} \circ \gamma}\lambda=\int_{u_1\circ \gamma}\lambda=\ca\big(u_1\circ \gamma \big) \hspace{,5cm} \text{for every edge $\gamma$ of $\Gamma$,}
$$ 
where by an edge $ \gamma $ of $\Gamma$ here we mean a parametrized $1$-face of $\Gamma$. The argument is very similar to the one for symplectic $2$-discs given in \cite[Page 17]{buop}, however a small modification is needed since here we are dealing with isotropic discs (instead of symplectic $2$-discs). Look at the discs $ v_0' $ and $ u_1 $. For any edge (i.e. a parametrized $1$-face) $ \gamma $ of $ \Gamma $, the actions $\ca(v_0' \circ \gamma) = \int_{v_0' \circ \gamma} \lambda $ and $ \ca(u_1 \circ \gamma) =\int_{u_1 \circ \gamma} \lambda $ do not necessarily coincide. Fix a $0$-face $z_0 \in \Gamma_0 $, and for any other $0$-face $z \in \Gamma_0 $, choose a path $\gamma_z$ made of successive edges of $ \Gamma $ which joins $z_0$ to $z$. Define 
$$
a_z:=\int_{u_1 \circ \gamma_z}\lambda-\int_{v_0' \circ \gamma_z}\lambda.
$$
Notice that these numbers depend on the choice of $z_0$ but not of $\gamma_z$ since $v_0, u_1 $ are isotropic. Then, for each edge $\gamma$ of $ \Gamma $, 
$$
\ca(v_0' \circ \gamma)+a_{\gamma(1)}-a_{\gamma(0)}=\ca(u_1 \circ \gamma)
$$
(because $a_{\gamma(1)}$ can be obtained by integrating $\lambda$ along a path that joins $z_0$ to $\gamma(0)$, concatenated with $\gamma$).
Now choose disjoint spherical shells $ A_z = \{ w \in \mathbb{C}^n \; | \; \rho_z  < |w-z| < \rho_z' \} \subset W_z $, for all $ z \in \Gamma_0 $. Consider a Hamiltonian function $ H_\mathcal{A} $ with support in $\cup_z B(z,\rho_z')$, and which is equal to $ -a_z $ on $ B(z,\rho_z) $. The induced Hamiltonian isotopy $ (\psi_\ca^t) $ is supported inside $\cup_z W_z$, and its time-$1$ map $ \psi_\ca^1 $ is such that for every edge $ \gamma $ of $ \Gamma $, the area between $ v_0' \circ \gamma$ and $ \psi_\ca^1 \circ v_0'  \circ \gamma$ equals $ a_{\gamma(1)}-a_{\gamma(0)} $, hence now the actions of $u_1$ and $ \psi_\ca^1 \circ v_0' $ coincide on each edge. Since $\psi_\ca^t\equiv \id$ near $\Gamma_0$, $ \tilde \Psi_0^t := (\psi_\ca^t) \star (\psi_0^t) $ and 
$ \tilde v_0 := \tilde \Psi_0^1 \circ u_0  = \psi_\ca^1 \circ v_0' $ still verify ($\ci 1$), and the restriction of $d \tilde \Psi_0^t \circ  du_{0} = d  \psi_\ca^t \circ d \psi_0^t  \circ  du_{0}$ to $ \op(\Gamma_0) $ is still homotopic to $G_t$ rel $\{0,1\}$. 
Also, since $ (\psi_\ca^t) $ is supported in $\cup_z W_z$, ($\ci 3$) remains to hold for the flow $ (\tilde \Psi_0^t) $, 
and in addition we have $ \ca(\tilde v_0 \circ \gamma) = \ca(u_1 \circ \gamma) $ for every edge $ \gamma $ of $ \Gamma $.

However, $\tilde v_0 $ might not verify ($\ci 4$). Still, since $\tilde v_0$ coincides with $u_1$ on a \nbd of $\Gamma_0$, there exist closed balls $ \overline B_{x_0} = \overline B(u_1(x_0),r) \subset W_{x_0} $ for each $0$-face $x_0$ of $\Gamma$, such that ($\ci 4$) is verified inside these balls. Therefore the traces of the submanifolds $\tilde v_0(x_1)$ and $u_1(x_1')$ inside 
$ \underset{x_0 \in \Gamma_0}\cup  \left( W_{x_0} \priv  \overline B_{x_0} \right) $ verify the hypothesis of lemma \ref{le:hamgic} (1), for every pair of distinct $1$-faces $x_1,x_1'$. Thus an arbitrarily $\cc^1$-small Hamiltonian flow $(\phi_0^t)$ whose generating Hamiltonian is supported in $ \underset{x_0 \in \Gamma_0}\cup  \left( W_{x_0} \priv  \overline B_{x_0} \right) \subset \underset{x_0\in\Gamma_0}\cup W_{x_0} $ achieves $ \phi_0^1\circ \tilde v_0(x_1)\pitchfork u_1(x_1')$, for every pair $x_1,x_1'$ of different $1$-faces of $\Gamma$ (hence these intersections are empty). Now the (reparametrized) concatenation $\Psi_0^t:=(\phi_0^t) \star (\tilde \Psi_0^t) $ of the flows verifies ($\ci 4$), still verifies ($\ci 1$),  and ($\ci 3$) still holds for $ v_0 := \Psi_0^1 \circ u_0 $. Since $\phi_0^t\equiv \id$ near $\Gamma_0$, the restriction $d \Psi_0^t \circ  du_{0|\op(\Gamma_0)} $ is still homotopic to $G_t$ rel $\{0,1\}$. Since the flow $ \phi_0^t $ is generated by a Hamiltonian function that vanishes on $\underset{x_0 \in \Gamma_0}\cup \overline B_{x_0}$, the equality of actions $ \ca(v_0 \circ \gamma) = \ca(u_1 \circ \gamma) $ remains to hold for every edge $ \gamma $ of $ \Gamma $. Finally, ($\ci 2$) follows immediately from ($\ci 3$), and $ v_0 $ satisfies ($\ci 5$) by direct application of lemma \ref{le:goodchoice}.

\noindent{\bf The $l$-skeleton ($1 \leqslant l <n-1$):} Here we assume that $\Psi_1,\dots,\Psi_{l-1}$ have been constructed, and we proceed with the induction step. Recall that $v_{l-1}=\Psi_{l-1}^1\circ \dots\circ \Psi_0^1\circ u_0$ coincides with $u_1$ on $\op(\Gamma_{l-1})$ and that $v_{l-1}(x_k)\subset W_{x_k}^{3^{l-1}-1} $ for every $ k $-face $x_k$. Recall also that we have a homotopy $G_t^l:D^k\to G^\iso(k,n)$ between $dv_{l-1}$ and $du_1$ rel $\op(\Gamma_{l-1})$. Now our aim is to find a Hamiltonian flow $ (\Psi_l^t) $ which in particular isotopes $ v_{l-1|\op(x_l)} $ to $ u_{1|\op(x_l)}$, for each $l$-face $x_l$. 
 
Fix an $l$-face $x_l$ of $\Gamma$. By {($\ci 1$)}, there exists an open box $\hat x_l\Subset \rond{x_l}$ such that ${v_{l-1}}$ and $u_1$ coincide on $\op(x_l\priv \hat x_l)$. Choose a $ k $-face $ x_k $ which contains $ x_l $. Since $u_1(\hat x_l)$ and ${v_{l-1}}(\hat x_l)$ both lie in the topological ball $W_{x_k}^{3^{l-1}-1} $ and coincide near their boundary, there exists a homotopy 
$$
\sigma_{x_l}:\hat x_l\times [0,1]\to W_{x_k}^{3^{l-1}-1}
$$
such that $\sigma_{x_l}(\cdot,0)={v_{l-1}}$, $\sigma_{x_l}(\cdot,1)=u_1$, and $\sigma_{x_l}(z,t)=u_1(z)$ $ \forall z \in \op(\partial \hat x_l), t \in [0,1] $. Since $\hat x_l\Subset \rond{x_l}$ and $l<n$, 
{($\ci 4$)} allows to use a general position argument to ensure that moreover $\im \sigma_{x_l}$ admits a regular \nbd $\cv_{x_l} \subset W_{x_k}^{3^{l-1}-1} $ (a topological ball), such that all these \nbds $ \cv_{x_l} $ are pairwise disjoint when $x_l$ runs over the $l$-faces (this is the only point in the proof where we need that $l<n-1$), and such that the restrictions of $ v_{l-1} $ and $ u_1 $ 
to $ \hat x_l $ are proper embeddings into $ \cv_{x_l} $ for every $l$-face $x_l$ of $\Gamma$.

By assumption, there exists a homotopy $G^t_l:[0,1]\times D^k\to G^\iso(k,n)$ between $dv_{l-1}$ and $du_1$, with $G^t_{l|\op(\Gamma_{l-1})}=du_1=dv_{l-1}$.  Also, $v_{l-1| \op(\hat x_l)}$ is clearly homotopic to $ u_{1| \op(\hat x_l)} $ rel $\op(\partial \hat x_l)$ in $\cv_{x_l}$, and when $l=1$, $\ca(v_{l-1} (\hat x_l))=\ca(u_1 (\hat x_l))$ (in this equality of actions, $ \hat x_l \subset x_l $ is equipped with a chosen orientation, and the equality holds since the actions of $ v_{l-1} (x_l) $ and $u_1(x_l)$ coincide and since ${v_{l-1}}$ and $u_1$ agree on $ x_l\priv \hat x_l$). Hence by proposition \ref{prop:hpiso-proper}, there exist Hamiltonian diffeomorphisms $\psi^t_{x_l}$, where $x_l$ runs over the $l$-faces, which have support in $\cv_{x_l}$, and are such that $\psi^1_{x_l}\circ v_{l-1| \op(\hat x_l)}=u_1$, and the restriction of $d (\psi^t_{x_l}\circ v_{l-1}) $ to $\op(\partial \hat x_l)$ is homotopic to $G^t_{l}$ relative to $\op(\partial \hat x_l)$ and $\{0,1\}$. Let now $\psi^t_l:=\circ \psi_{x_l}^t$ and $\hat v_l:= \psi_l^1\circ v_{l-1} $. Since the $(\psi_{x_l}^t)$ have pairwise disjoint supports, we have 
$\hat v_{l| \op(x_l)}=u_{1| \op(x_l)}$ for each $l$-face $x_l$ of $\Gamma$. Hence $\hat v_l$ and $u_1$ coincide on a \nbd of the $l$-skeleton of $\Gamma$, so $\hat v_l$ verifies ($\ci 1$). By lemma \ref{le:goodchoice}, $\hat v_l$ verifies ($\ci 5$) as well.

The flow $(\psi_{l}^t)$ is supported in the disjoint union $ \cup_{x_l \in \Gamma_l} \cv_{x_l} $. Let $ x $ be any $ k $-face, and assume that we have an $ l $-face $ x_l $ such that $ \cv_{x_l} \cap W_x \neq \emptyset $. Let $ x_k \supset x_l $ be a $ k $-face as above, so that $ \cv_{x_l} \subset W_{x_k}^{3^{l-1}-1}$. Then the distance between $ x $ and $ x_k $ is not larger than $ 3^{l-1} $, and we conclude 
$ \cv_{x_l} \subset W_{x_k}^{3^{l-1}-1} \subset W_{x}^{2 \cdot 3^{l-1} -1} $. To summarise, for any $ k $-face $x$, if $ x_l $ is an $ l $-face with $ \cv_{x_l} \cap W_x \neq \emptyset $, then $ \cv_{x_l} \subset W_{x}^{2 \cdot 3^{l-1} -1} $. As a result, we get
\begin{equation} \label{eq:I5prelim}
 \psi_{l}^t (W_x) \subset W_{x}^{2 \cdot 3^{l-1} -1} \,\,.
\end{equation}

The embedding $\hat v_l$ may fail to satisfy ($\ci 4$): there might be two different  $(l+1)$-faces $x_{l+1},x_{l+1}'$ such that 
$$
\hat v_l(\rond x_{l+1})\cap  u_1(\rond x{}_{l+1}')\neq \emptyset.
$$
Notice however that since $\hat v_l$ and $u_1$ coincide on a \nbd of $\Gamma_l$, the set $\hat v_l(x_{l+1})\cap u_1(x_{l+1}')$ is compactly contained in $ W \priv u_1(\Gamma_l) $. By lemma \ref{le:hamgic} (1), there exists an arbitrarily small Hamiltonian flow $(\phi_l^t)_{t\in [0,1]}$, with compact support in $W \priv \Gamma_l$ such that $v_l:=\phi_l^1 \circ \hat v_l$ verifies ($\ci 4$). By the smallness of the flow $(\phi_l^t) $ and by $ (\ref{eq:I5prelim}) $, the flow $(\Psi^t_l):=(\phi_l^t) \star (\psi_l^t) $ satisfies $  \Psi_{l}^t (W_x) \subset W_{x}^{2 \cdot 3^{l-1}} $ for any $ k $-face $ x $. Hence ($\ci 3$) holds for $(\Psi^t_l)$.
Since the support of $(\phi_l^t)$ is compactly contained in $W \priv \Gamma_l$, ($\ci 1$) {and ($\ci 5$)} still holds for $v_l$. Finally, ($\ci 2$) follows as well: if $ x $ is any $ k $-face, then by assumption, $ v_{l-1}(x) \subset W_x^{3^{l-1}-1} $, hence by $ (\ref{eq:I5prelim}) $ and ($\ci 3$) we get 
\begin{equation}\label{eq:fuloc}
\begin{array}{rcl}
 v_l(x) & = &\ds \Psi_l^1 \circ v_{l-1} (x)  \subset  \Psi_l^1(W_x^{3^{l-1}-1})=\bigcup_{d(x,y)\leqslant 3^{l-1}-1}\Psi_l^1(W_y)\subset \\
 & \subset &\ds\bigcup_{d(x,y)\leqslant 3^{l-1}-1}W_y^{2\cdot 3^{l-1}}=W_x^{3^{l-1}-1 + 2 \cdot 3^{l-1}} = W_x^{3^{l}-1}.
\end{array}
\end{equation}

\noindent {\bf The $k$-skeleton:}
When $k<n-1$, the procedure described above works perfectly. However, when $k=n-1$, the last step of the induction requires some adjustment. 
As before, for every $ k $-face $ x_k $, ${v_{k-1}(x_k)}$ and $ u_1(x_k)$ both lie in the topological ball $W^{3^{k-1}-1}_{x_k} $ and coincide near the boundary, hence there exist homotopies 
$$
\sigma_{x_k}:\hat x_k\times[0,1]\to W^{3^{k-1}-1}_{x_k} 
$$
such that $\sigma_{x_k}(\cdot,0)={v_{k-1|x_k}}$, $\sigma_{x_k}(\cdot,1)=u_{1|x_k}$ and $\sigma_{x_k}(z,t)=u_1(z)$ for all $t\in[0,1]$, $z\in \op(\partial x_k)$ (as before, $\hat x_k \subset \rond{x_k} $ is a closed box such that $u_1$ and ${v_{k-1}}$ coincide on $\op(x_k\priv \rond{\hat x}_k)$). 
The difference with the previous steps of the induction is that general position does not make the sets $\im \sigma_{x_k}$ pairwise disjoint. Instead we proceed as follows. 

By ($\ci 4$), ${v_{k-1}}(\hat x_k)\cap u_1(x_k')=u_1(\hat x_k)\cap u_1(x_k')=\emptyset$ for every  pair of different $k$-faces $x_k,x_k'$. 
By a standard general position argument, since $k<n$, we can therefore assume that $\im \sigma_{x_k}\cap u_1(x_k')=\emptyset$, and that we have a regular \nbd $\cv_{x_k}\subset W^{3^{k-1}-1}_{x_k}$ of $\im \sigma_{x_k}$, such that 
\begin{equation}\label{eq:disjoint}
\cv_{x_k}\cap u_1(x_k')=\emptyset \hspace{1cm} \forall x_k\neq x_k'.
\end{equation} 
{By ($\ci 5$), and since $v_{k-1}(\hat x_k),u_1(\hat x_k)$} are homotopic relative to $\partial \hat x_k$ in $\cv_{x_k}$, there exists 
a Hamiltonian isotopy $(\psi^t_{x_k})$ with support in $\cv_{x_k}$ such that $\psi^1_{x_k}\circ {v_{k-1|x_k}}=u_1$. 

Consider now a partition of the set of $k$-faces into $(2 \cdot 3^{k-1})^k = 2^k \cdot 3^{k(k-1)} $ subsets $F_i$ ($i=1,\dots,2^k \cdot 3^{k(k-1)}$), such that any two faces $x_k,x_k'\in F_i$ are at distance at least $ 2\cdot 3^{k-1} $ from each other. Then for any $ i $ and any pair $ x_k, x_k' \in F_i $ of distinct $ k $-faces, we have $ W^{3^{k-1}-1}_{x_k} \cap W^{3^{k-1}-1}_{x_k'} = \emptyset $. Define $(\psi^t_{k,i}):=\underset{x_k\in F_i}{\circ} \psi^t_{x_k}$, which is a composition of Hamiltonian isotopies, compactly supported in the disjoint union 
$ \cup_{x_k \in F_i} W^{3^{k-1}-1}_{x_k} $. For any $ k $-face $ x $, if we have some $ x_k \in F_i $ such that $ W_x \cap W^{3^{k-1}-1}_{x_k} \neq \emptyset $, then the distance between $ x $ and $ x_k $ is at most $ 3^{k-1} $, and hence $ W^{3^{k-1}-1}_{x_k} \subset W^{2 \cdot 3^{k-1}-1}_x $. We conclude that for any $ k $-face $ x $ we have $ \psi^t_{k,i} (W_x) \subset W^{2 \cdot 3^{k-1}-1}_x $.

Now, letting $(\Psi^t_k):=(\psi^t_{k,2^k \cdot 3^{k(k-1)}}) \star \dots \star (\psi^t_{k,1})$ and arguing as in \eqref{eq:fuloc}, we get for any $ k $-face $ x $ 
$$ \Psi^t_{k} (W_x) \subset W^{N_k}_x \subset W^{3^{k(k+1)}}_x \,\, ,$$
where $ N_k = 2^k \cdot 3^{k(k-1)} \cdot (2 \cdot 3^{k-1}-1) < 3^{k(k+1)} $. Therefore, ($\ci 3$) holds for $ (\Psi^t_k) $.

Finally, $\psi^1_{k,i}\circ {v_{k-1|\op(x_k)}}=u_{1|\op(x_k)}$ for all $x_k\in F_i$, and by ($\ref{eq:disjoint}$),  $\psi_{x_k'}^1 \circ u_{1|\op(x_k)}=u_{1|\op(x_k)}$ for any pair of $ k $-faces $x_k'\neq x_k$. Thus, 
$$
\Psi^1_k\circ {v_{k-1|\op(x_k)}}=u_1 \text{ for every $k$-face $x_k$ of }\Gamma,
$$
which just means that $\Psi^1_k\circ {v_{k-1|\op(D^k)}}\equiv u_{1|\op(D^k)} $. We have verified ($\ci 1$) for $ v_k := \Psi^1_k\circ {v_{k-1}} $.
\cqfd

\section{Action of symplectic homeomorphisms on symplectic submanifolds}\label{sec:eligro}

\subsection{Taking a symplectic disc to an isotropic one}
We aim now at proving theorem \ref{thm:symptoiso}. The proof relies on theorem \ref{thm:qhiso1} and is similar to the proof of the flexibility of disc area in the context of symplectic 2-discs considered in \cite{buop}.

\paragraph{Proof of theorem \ref{thm:symptoiso}:} Let 
$$
\begin{array}{ll}
\begin{array}{rcccl}
 i_0 & : & D & \longrightarrow &\C\times\C\times \C =\C^3, \hspace{,5cm}\\
    &   & x+iy & \longmapsto & (x,y,0)
\end{array} & 
\begin{array}{rcccl}
 u_0 & : & D & \longrightarrow &\C\times \C\times \C \\
    &   &z  & \longmapsto & (z,0,0)
\end{array}
\end{array}
$$
be the standard isotropic and symplectic embeddings of $D$ into $\C^3$. Let also $f_k:D(2)\to D(\nf 1{2^k}) $ be an area-preserving immersion and 
\fonction{u_k}{D}{\C\times \C\times \C}{x+iy}{(x,y,f_k(x+iy)).}
Then, $u_k$ is a symplectic embedding of $ D$ into $\C^3$ with $\und(u_k,i_0)<\frac 1{2^k}$.  Let finally consider an isotropic embedding $i_k^l$ of $D$ into $\C^3$ with $\und(i_k^l,u_k)<\frac 1{2^l}$. Although less explicit than the previous embedding in dimension $6$, it certainly exists because one can approximate the standard symplectic embedding $u_0$ by isotropic ones of the form $z\mapsto (z,\overline{f_l(z)},0)$. We also define 
$$
\begin{array}{r}
W_k(\delta):=\{z\in \C^3\;|\; d(z,\im u_k)<\delta\}\\
\text{and } W^0(\eps):=\{z\in \C^3\;|\; d(z,\im i_0)<\eps\}.
\end{array}
$$

It is enough to construct a sequence $\phi_0,\phi_1,\ldots$ of compactly supported in $ \C^3 $ symplectic diffeomorphisms, such that for an increasing sequence of indices $ k_0 = 0 < k_1 < k_2 < \ldots $ we have $\phi_i\circ u_{k_i}=u_{k_{i+1}}$, and such that moreover, the sequence $ \Phi_i = \phi_i \circ \phi_{i-1} \circ \cdots \circ \phi_0 $ uniformly converges to a homeomorphism $ \Phi $ of $ \C^3 $. We construct such a sequence $ \phi_i $ by induction. Let $ \C^3 = U_0 \supset U_1\supset U_2\supset \dots\supset u_{0}(D)$ be a decreasing sequence of open sets such that $\cap U_i=u_{0}(D)$. In the step $ 0 $ of the induction, we let $ k_1 = 1 $, and choose $ \phi_0 $ to be any symplectic diffeomorphism with compact support in $\C^3$ such that $\phi_0\circ u_0=u_{k_1}$. 

Now we describe a step $ i \geqslant 1 $. From the previous steps we get $ k_1 < \cdots < k_i $, and symplectic diffeomorphisms $ \phi_0,\ldots,\phi_{i-1} $. Denote $ \Phi_{i-1} = \phi_{i-1} \circ \cdots \circ \phi_0 $. By the step $i-1$, we have $ u_{k_i} = \Phi_{i-1} \circ u_0 $ and $ \Phi_{i-1} (U_{i-1}) \supset W^0(\eps_i) $, where $ \eps_i = \frac{1}{2^{k_i}} $ (the inclusion $ \Phi_{i-1} (U_{i-1}) \supset W^0(\eps_i) $ clearly holds when $ i=1 $ because $ U_0 = \C^3 $, and for $ i > 1 $ it follows from ($\ref{eq:ind-inc}$) below which was obtained in the previous step $ i -1 $).  
The choice for $\eps_i$ implies that $W^0(\eps_i) \supset u_{k_i}(D) $, and moreover by $u_{k_i}=\Phi_{i-1}\circ u_0$ we get $\Phi_{i-1}(U_i)\supset u_{k_i}(D)$, so we conclude 
$ \Phi_{i-1}(U_i) \cap W^0(\eps_i) \supset u_{k_i}(D) $. Hence we can choose a sufficiently large $ l_i \geqslant k_i $ such that $ \Phi_{i-1}(U_i) \cap W^0(\eps_i) \supset W_{k_i}(\delta_i) \supset i_{k_i}^{l_i}(D) $, where $ \delta_i = \frac{1}{2^{l_i}} \leqslant \eps_i $. Note that $$ \und(i_{k_i}^{l_i},i_0) \leqslant \und(i_{k_i}^{l_i},u_{k_i}) + \und(u_{k_i},i_0) < \frac{1}{2^{l_i}} + \frac{1}{2^{k_i}} \leqslant 2 \eps_i ,$$
and moreover $ i_0(D), i_{k_i}^{l_i}(D) \subset W^0(\eps_i) $. Hence by the convexity of $ W^0(\eps_i) $ and by theorem \ref{thm:qhiso1}, there exists a Hamiltonian diffeomorphism $ \phi_i' $ supported in $ W^0(\eps_i) $ such that $ i_0 = \phi_i' \circ i_{k_i}^{l_i} $ and $ \und(\phi_i',\id) < 4\eps_i $. Note that in particular, $ \phi_i'(W_{k_i}(\delta_i)) \supset i_0(D) $.

We claim that there exists a homotopy of {\em{a small size}} between the (symplectic) disc $ \phi_i' \circ u_{k_i} $ and the (isotropic) disc $ i_0 $, inside $ \phi_i'(W_{k_i}(\delta_i)) $. Indeed, the open set $ W_{k_i}(\delta_i) $ contains the discs $ u_{k_i}(D), i_{k_i}^{l_i}(D) $. Also we have $ \und(u_{k_i},i_{k_i}^{l_i}) < \delta_i$. Hence the linear homotopy $ \rho_i(z,t) := (1-t) u_{k_i}(z) + t i_{k_i}^{l_i}(z) $, ($ z \in D $, $ t \in [0,1] $), satisfies $ \und(u_{k_i}(z),\rho_i(z,t)) < \delta_i $ for all $ z \in D $, $ t \in [0,1] $,
and so by definition of the neighbouhood $ W_{k_i}(\delta_i) $, this homotopy $ \rho_i $ lies inside $ W_{k_i}(\delta_i) $. We moreover conclude that the size of $ \rho_i $ is less than $ \delta_i $, and therefore the homotopy $ \phi_i' \circ \rho_i $ between $ \phi_i' \circ u_{k_i} $ and $ \phi_i' \circ i_{k_i}^{l_i} = i_0 $, lies inside $ \phi_i'(W_{k_i}(\delta_i)) $, and has size less than $ \delta_i + 8 \eps_i \leqslant 9\eps_i $ (recall that $ \und(\phi_i',\id) < 4\eps_i $). 

We therefore have $ \phi_i'(W_{k_i}(\delta_i)) \supset i_0(D) $, and moreover the homotopy $ \phi_i' \circ \rho_i $ between $ \phi_i' \circ u_{k_i} $ and $ i_0 $, lies inside $ \phi_i'(W_{k_i}(\delta_i)) $, and is of size less than $ 9\eps_i $.
Hence by choosing a sufficiently large $ k_{i+1} > k_i $ and denoting $ \eps_{i+1} = \frac{1}{2^{k_{i+1}}} $, we get $$ \phi_i'(W_{k_i}(\delta_i)) \supset W^0(\eps_{i+1}) \supset u_{k_{i+1}}(D) , $$ and moreover the homotopy between $ \phi_i' \circ u_{k_i} $ and $ u_{k_{i+1}} $, given by the concatenation of $ \phi_i' \circ \rho_i $ and of the linear homotopy between $ i_0 $ and $ u_{k_{i+1}} $, lies in $ \phi_i'(W_{k_i}(\delta_i)) $ and still has size less than $ 9\eps_i $. Applying the quantitative $h$-principle for symplectic discs  \cite[Theorem 2]{buop}, we get a Hamiltonian diffeomorphism $ \phi_i'' $ supported in $ \phi_i'(W_{k_i}(\delta_i)) $, such that $ \phi_i'' \circ \phi_i' \circ u_{k_i} = u_{k_{i+1}} $ and $ \und(\phi_i'',\id) < 18\eps_i $. 

As a result, the composition $ \phi_i := \phi_i'' \circ \phi_i' $ is supported in $ W^0(\eps_i) \subset \Phi_{i-1}(U_{i-1}) $, we have $ \phi_i \circ u_{k_i} = u_{k_{i+1}} $, 
\begin{equation} \label{eq:ind-inc}
\phi_i \circ \Phi_{i-1} (U_i) = \phi_i'' \circ  \phi_i' \circ \Phi_{i-1} (U_i) \supset \phi_i'' \circ  \phi_i' (W_{k_i}(\delta_i)) = \phi_i' (W_{k_i}(\delta_i)) \supset W^0(\eps_{i+1}) 
\end{equation}
and $$ \und(\id,\phi_i) \leqslant \und(\id,\phi_i') + \und(\id,\phi_i'') < 22\eps_i .$$ This finishes the step $ i $ of the inductive construction.

To summarize, we have inductively constructed a sequence of Hamiltonian diffeomorphisms $\phi_0, \phi_1, \ldots $ with uniformly bounded compact supports in $\C^3$, such that:
\begin{itemize}
\item[(i)] $\phi_i$ has support in $W^0(\eps_i)\subset \Phi_{i-1}(U_{i-1})$ where $\Phi_{i-1} = \phi_{i-1}\circ\dots\circ \phi_0$, 
\item[(ii)] $\und(\id, \phi_i)< 22\eps_i = \frac{22}{2^{k_i}}$,
\item[(iii)] $u_{k_{i+1}}=\phi_i\circ u_{k_i}$.  
\end{itemize}

It follows by (ii) that $\Phi_i$ is a Cauchy sequence in the $ \cc^0 $ topology, hence uniformly converges to some continuous map $ \Phi : \mathbb{C}^3 \rightarrow \mathbb{C}^3 $. Next, since $ u_{k_{i+1}} = \phi_{i} \circ u_{k_i} $ for every $ i \geqslant 0 $, we have $ i_0 = \Phi \circ u_{0} $. Finally, we claim that $\Phi$ is an injective map, hence a homeomorphism. To see this, consider two points $x\neq y\in U_0 = \C^3 $. If $x,y\in u_{0}(D)$, then by (iii), $\Phi(x)=i_0\circ u_{0}^{-1}(x)\neq  i_0\circ u_{0}^{-1}(y)=\Phi(y)$. If $x,y\notin u_{0}(D)$, then $x,y\in {}^cU_i$ for $i$ large enough, so by (i), $ \Phi_i(x) = \Phi_{i+1}(x) = \Phi_{i+2}(x) = ... = \Phi(x) $, and similarly $\Phi_i(y)=\Phi(y)$ (because for each $ j > i $, the support of $ \phi_j $ lies in $ \Phi_{j-1} (U_{j-1}) \subset \Phi_{j-1}(U_i) $), so $\Phi(x)=\Phi_i(x)\neq \Phi_i(y)=\Phi(y)$. Finally, if $x\in u_{0}(D)$ and $y\notin  u_{0}(D)$, then $y\in {}^cU_i$ for $i$ large enough, and so $\Phi(y)=\Phi_i(y)\in \Phi_i({}^c U_i)\subset {}^c W^0(\eps_{i+1})$ by (i). Since $\Phi(x)\in \im i_0\subset W^0(\eps_{i+1})$, we conclude that also in this case we have $\Phi(x)\neq \Phi(y)$.\cqfd

\subsection{Relative Eliashberg-Gromov $\cc^0$-rigidity}
Here we address the following question which appeared in our earlier work \cite{buop}:
\begin{question}\label{q:eligro}
Assume that a symplectic homeomorphism $h$ sends a smooth submanifold $N$ to a submanifold $N'$, and that $h_{|N}$ is smooth. Under which conditions $h^*\om_{|N'}=\om_{|N}$ ?
\end{question}

\noindent Of course, that question is non-trivial only when $\dim N$ is at least $2$, which we assume henceforth. The question is particularly interesting in the setting of pre-symplectic submanifolds. 
Recall that a submanifold $N\subset (M,\om)$ is called pre-symplectic if $\om$ has constant rank on $M$. The {\it symplectic dimension} $\dim^\om N$ of a pre-symplectic submanifold $N$ is the minimal dimension of a symplectic submanifold that contains $N$. One checks immediately that $\dim^\om N=\dim N+\corank \om_{|N}$. 

In \cite{buop}, we answered question \ref{q:eligro} in various cases of the pre-symplectic setting. Theorem \ref{thm:symptoiso} allows to address almost all the remaining cases. Our next result incorporates these remaining cases, together with those verified in \cite{buop}:

\begin{thm}\label{thm:eligro}
Let $N\subset (M^{2n},\om)$ be a pre-symplectic disc. Then the answer to question \ref{q:eligro} is 
\begin{itemize}
\item Negative if $\dim^\om N\leqslant 2n-4$, or if $\dim^\om N=2n-2$ and $\corank \om_{|N}\geqslant 2$.
\item Positive if  $\dim^\om N=2n$, or if $\dim^\om N=2n-2$ and  $\corank \om_{|N}=0$.
\end{itemize} 
\end{thm}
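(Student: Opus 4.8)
The plan is to dispose of the four cases by combining the two new constructions of this paper with the rigidity half that was established in \cite{buop}. The positive cases are exactly the ones already treated there: if $\dim^\om N = 2n$ then $N$ is a symplectic disc, and the Eliashberg--Gromov type rigidity for symplectic discs from \cite{buop} gives $h^*\om_{|N'}=\om_{|N}$; if $\dim^\om N = 2n-2$ with $\corank\om_{|N}=0$ then again $N$ is symplectic of codimension $2$, and the codimension-$2$ rigidity result of \cite{buop} applies verbatim. So for the positive part I would simply cite \cite{buop} and check that the hypotheses (symplectic homeomorphism sending $N$ smoothly to $N'$) are precisely the ones under which those theorems were proved.

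For the negative part, the strategy is to reduce to Theorem \ref{thm:symptoiso} by taking products. First consider $\dim^\om N \leqslant 2n-4$. Since $N$ is pre-symplectic, locally $N$ embeds into its symplectic envelope $P^{2m}$ of dimension $2m = \dim^\om N \leqslant 2n-4$, and inside $P$ the disc $N$ splits (again locally, and up to a symplectomorphism) as a product $N \cong N_{\mathrm{symp}} \times L$, where $N_{\mathrm{symp}}$ carries the symplectic part and $L$ is isotropic, with the ambient factor looking like $\C^{m} \times \C^{n-m}$ and $m \geqslant 1$. If $N$ were purely isotropic ($N_{\mathrm{symp}}$ is a point) there is nothing to prove on the symplectic side, so I may assume $\dim N_{\mathrm{symp}} \geqslant 2$, hence $N_{\mathrm{symp}}$ contains a symplectic $2$-disc. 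Now $2n - \dim^\om N \geqslant 4$, so there is a $\C^3$-factor available transverse to the symplectic $2$-disc; apply Theorem \ref{thm:symptoiso} on that $\C^3$ to get a compactly supported symplectic homeomorphism of $\C^3$ carrying the symplectic $2$-disc to an isotropic disc, take its product with the identity on the complementary factors (and with the identity on the $L$-direction), and obtain a symplectic homeomorphism $h$ of $M$ that is smooth on $N$, maps $N$ to a submanifold $N'$ on which $\om$ vanishes in the directions coming from the $2$-disc; in particular $h^*\om_{|N'}\neq \om_{|N}$ there. This answers the question negatively.

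The case $\dim^\om N = 2n-2$ with $\corank\om_{|N}\geqslant 2$ is handled the same way: here $\dim N = \dim^\om N - \corank\om_{|N} = 2n-2-\corank\om_{|N} \leqslant 2n-4$, but what matters is that $2n - \dim^\om N = 2$ is too small to host a $\C^3$ directly — so the reduction must instead use the condition $\corank \om_{|N}\geqslant 2$. Since the corank is at least $2$, $N$ splits locally as $N_{\mathrm{symp}} \times L$ with $\dim L \geqslant 2$, so $L$ contains an isotropic $2$-disc lying in a symplectic envelope of the form $\C^2 \times (\text{codim }4\ \text{isotropic directions})$; rearranging, $N$ sits inside a symplectic submanifold of dimension $2n-2$, and within the ambient $\C^n$ one can find a $\C^3$ in which an isotropic $2$-disc coming from $L$ sits together with the symplectic $2$-disc coming from $N_{\mathrm{symp}}$ that we want to move — and again Theorem \ref{thm:symptoiso} applied in that $\C^3$, producted with the identity, yields the desired symplectic homeomorphism destroying $\om_{|N}$.

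The main obstacle I anticipate is the bookkeeping in the negative cases: verifying that the local product splitting of a pre-symplectic disc can be arranged compatibly with a product decomposition $\C^{a}\times\C^{b}$ of the ambient space, so that the $\C^3$ factor in which Theorem \ref{thm:symptoiso} lives is genuinely transverse to the directions one wishes to keep fixed, and checking that ``product with the identity'' of a compactly supported symplectic homeomorphism is still a symplectic homeomorphism (this is the standard fact that $\overline{\Ham}$ is closed under products, already used implicitly in \cite{buop} for the codimension-$4$ corollary stated after Theorem \ref{thm:symptoiso}). The dimension count — making sure there is always either a spare $\C^3$ transverse to a symplectic $2$-disc inside $N$, or enough isotropic corank to play the same game — is exactly what forces the threshold $\dim^\om N \leqslant 2n-4$ (resp. $\corank \geqslant 2$), and matching these inequalities to the cases left open in \cite{buop} is the only genuinely delicate point.
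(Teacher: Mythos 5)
Your positive part is correct and matches the paper: both results are imported from \cite{buop} unchanged.

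The negative part has two genuine gaps, and they trace back to the same misreading of how Theorem~\ref{thm:symptoiso} is meant to be used. The paper uses Theorem~\ref{thm:symptoiso} \emph{in the inverse direction}: it takes an \emph{isotropic} $2$-disc sitting in the corank directions of $N$ and turns it into a \emph{symplectic} $2$-disc, via $f(z_1,z_2,w_1)$ acting on a $\C^3 = \C^2_{(z_1,z_2)}\times\C_{(w_1)}$ where $(z_1,z_2)$ are isotropic directions of $N$ and $w_1$ is a single direction transverse to the symplectic envelope. This requires only corank $\geq 2$ and one spare $\C$ (i.e.\ $\dim^\om N \leq 2n-2$), so it uniformly covers both the $\dim^\om N = 2n-2$, $\corank\geq 2$ case and the isotropic case of $\dim^\om N\leq 2n-4$. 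The non-isotropic case with $\dim^\om N\leq 2n-4$ is disposed of by citing the contracting symplectic homeomorphism from \cite{buop}, not by Theorem~\ref{thm:symptoiso} at all.

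By contrast, you apply Theorem~\ref{thm:symptoiso} in the forward direction, looking for a symplectic $2$-disc inside $N_{\mathrm{symp}}$ to flatten into an isotropic one. First gap: you write that when $N$ is purely isotropic ``there is nothing to prove on the symplectic side.'' This is wrong; if $N$ is isotropic then $\om_{|N}=0$, but the question is whether a symplectic homeomorphism can make $h^*\om_{|N'}\neq 0$, and the paper explicitly notes that its corank argument answers this for $\dim^\om N \leq 2n-4$ isotropic. Second gap: for $\dim^\om N = 2n-2$, $\corank\geq 2$, there is no spare $\C^2$ transverse to the symplectic envelope (only $m=1$ complementary complex direction), so you cannot embed a $\C^3$ that contains a symplectic $2$-disc of $N_{\mathrm{symp}}$ and is transverse to the rest of $N$; moreover the $\C^3$ you describe (``an isotropic $2$-disc from $L$ together with the symplectic $2$-disc from $N_{\mathrm{symp}}$'') would then meet $N$ in a $3$-dimensional set, to which Theorem~\ref{thm:symptoiso} does not apply. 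The fix in both gaps is to run the argument in the opposite direction, exactly as the paper does: place the $2$-disc in the corank directions $\C^2$, take the $\C^3 = \C^2 \times \C_{w_1}$ with $w_1$ the one transverse direction, and apply the inverse of the homeomorphism from Theorem~\ref{thm:symptoiso} to turn the isotropic $2$-disc into a symplectic one, thereby raising the rank of $\om_{|N}$ by~$2$.

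Your forward-direction argument does work for the remaining subcase (non-isotropic, $\dim^\om N \leq 2n-4$, where $m\geq 2$), and is a legitimate alternative to the paper's citation of \cite{buop} there. But as a complete proof of the negative part it fails for the isotropic subcase and for $\dim^\om N = 2n-2$.
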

The only case that remains open is when $\dim^\om N=2n-2$ and $\corank \om_{|N}=1$ (i.e. $\dim N=2n-3$, $\corank \om_{|N}=1$).

\noindent {\it Proof of theorem \ref{thm:eligro}:}
When $\dim^\om N\leqslant 2n-4$ and $N$ is not isotropic, the answer is negative because we can find a symplectic homeomorphism that fixes $N$ and contracts the symplectic form (by \cite{buop}). When $\dim^\om N\leqslant 2n-2$ 
and $r:=\text{corank}\, \om_{|N}\geqslant 2$, there is a local symplectomorphism that takes $N$ to $[0,1]^r\times D^k\times \{0\}\subset \C^r_{(z)}\times \C^k_{(z')}\times \C^m_{(w)}$, where $m\geq 1$ and $r\geq 2$. By theorem \ref{thm:symptoiso},  we can find a symplectic homeomorphism $f(z_1,z_2,w_1)$ of $\C^2\times \C$ which takes $[0,1]^2\times \{0\}$ to a symplectic disc. The induced map \fonction{\tilde f}{\C^2_{(z_1,z_2)}\times \C_{(w_1)}\times\C^{r-2} \times \C^k\times \C^{m-1}}{\C^n}{(z_1,z_2,w_1,z_3,\dots,z_r,z_1',\dots,z_k',w_2,\dots,w_m)}{f(z_1,z_2,w_1)\times \id}
is obviously a symplectic homeomorphism which takes $N$ to a submanifold on which the co-rank of the symplectic form is reduced by $2$. Note that this argument also works when   $\dim^\om N\leqslant 2n-4$ and $N$ is isotropic. The second item of the theorem was proved in \cite{buop}.\cqfd

{\footnotesize
\bibliographystyle{alpha}
\bibliography{biblio}
}

\bigskip
\noindent Lev Buhovski\\
School of Mathematical Sciences, Tel Aviv University \\
{\it e-mail}: levbuh@tauex.tau.ac.il
\bigskip

\bigskip
\noindent Emmanuel Opshtein\\
Institut de Recherche Math\'{e}matique Avanc\'{e}e \\
UMR  7501, Universit\'{e} de Strasbourg et CNRS \\
{\it e-mail}: opshtein@math.unistra.fr
\bigskip

\end{document}